\newtheorem{theorem}{Theorem}[section]
 \newtheorem{proposition}[theorem]{Proposition}
 \newtheorem{lemma}[theorem]{Lemma}
 \theoremstyle{definition}
 \newtheorem{definition}[theorem]{Definition}
\newcommand{\GETOUT}[1]{}
\newcommand{\toprob}[1]{\stackrel{#1}{\mapsto}}
\newcommand{\ds}{\displaystyle}
\renewcommand{\P}{\mathbb{P}}
\newcommand{\E}{\mathbb{E}}
\newcommand{\Fin}{\mathsf{Manila}}
\newcommand{\Manila}{\Fin}
\newcommand{\manila}{\mathsf{manila}}
\newcommand{\Paths}{\mathsf{Paths}}
\newcommand{\paths}{\mathsf{paths}}
\newcommand{\pdz}{\frac{d}{dz}}
\newcommand{\ppdz}{\frac{d^2}{dz^2}}
\newcommand{\Raney}{\mathsf{Raney}}
\newcommand{\Mean}{\mathsf{First}}
\def\specialsection{\@startsection{section}{1}%
  \z@{\linespacing\@plus\linespacing}{.5\linespacing}%
  {\normalfont}}
\def\section{\@startsection{section}{1}%
  \z@{.7\linespacing\@plus\linespacing}{.5\linespacing}%
  {\noindent\normalfont\bf}}
\newcommand{\finfigone}{
\begin{figure}[!h]
\begin{center}
\begin{tikzpicture}
\begin{scope}[rotate=90]
\node at (1,1.7) {(a)};
\draw[thick] (0,0) .. controls ++(0.6,0.4) .. ++(1,1);
\draw[thick] (0,0) -- ++(1.2,0);
\draw[thick] (0,0) .. controls ++(0.6,-0.4) .. ++(1,-1);
\draw[thick] (0,-2.5) .. controls ++(0.6,0.4) .. ++(1,1);
\draw[thick] (0,-2.5) -- ++(1.2,0);
\draw[thick] (0,-2.5) .. controls ++(0.6,-0.4) .. ++(1,-1);
\node [rotate=90] at (0.5,-3.75) {\tiny{front}};
\node [rotate=90] at (0.5,1.25) {\tiny{back}};
%
%
\end{scope}
\begin{scope}[xshift=5.5cm,rotate=90]
\node at (1,0.5) {(b)};
\draw[thick] (0,-1.250) .. controls ++(0.6,0.4) .. ++(1,1);
\draw[thick] (0,-1.250) -- ++(1.2,0);
\draw[thick] (0,-1.250) .. controls ++(0.6,-0.4) .. ++(1,-1);
%
%
\draw[thick] (0.4,-1.25+0.15) .. controls ++(0.5,0.4) .. ++(0.7,0.7);
\draw[thick] (0.4,-1.25+0.15) .. controls ++(0.6,0.1) .. ++(0.9,0.1);
\draw[thick] (0.4,-1.25+0.15) -- ++(0.8,0.4);
%
\end{scope}
\begin{scope}[xshift=10cm,rotate=90]
\node at (1,0.5) {(c)};
\draw[thick] (0,-1.250) .. controls ++(0.6,0.4) .. ++(1,1);
\draw[thick] (0,-1.250) -- ++(1.2,0);
\draw[thick] (0,-1.250) .. controls ++(0.6,-0.4) .. ++(1,-1);
%
%
%
\draw[thick] (0.4,-1.25-0.15) .. controls ++(0.5,-0.4) .. ++(0.7,-0.7);
\draw[thick] (0.4,-1.25-0.15) .. controls ++(0.6,-0.1) .. ++(0.9,-0.1);
\draw[thick] (0.4,-1.25-0.15) -- ++(0.8,-0.4);
\end{scope}
\end{tikzpicture}
\end{center}
\caption{There are three ways to arrange two Manlia folders, each with two compartments. $|\Fin_3(2,0)|=2$ and $|\Fin_3(2,1)|=1$.\label{fig:manila}}
\end{figure}
}
\newcommand{\finfigtwo}{
\begin{figure}[!h]
\begin{center}
\begin{tikzpicture}
\begin{scope}[rotate=90]
\node at (1,1.7) {(a)};
\draw[thick] (0,0) .. controls ++(0.6,0.4) .. ++(1,1);
\node[align=right] at (1.15,1) {\tiny $L$};
\draw[thick] (0,0) -- ++(1.2,0);
\node[align=right] at (1.35,0) {\tiny $L$};
\draw[thick] (0,0) .. controls ++(0.6,-0.4) .. ++(1,-1);
\node[align=right] at (1.15,-1) {\tiny $D$};
\draw[thick] (0,-2.5) .. controls ++(0.6,0.4) .. ++(1,1);
\node[align=right] at (1.15,-1.5) {\tiny $L$};
\draw[thick] (0,-2.5) -- ++(1.2,0);
\node[align=right] at (1.35,-2.5) {\tiny $L$};
\draw[thick] (0,-2.5) .. controls ++(0.6,-0.4) .. ++(1,-1);
\node[align=right] at (1.15,-3.5) {\tiny $D$};
\node [rotate=90] at (0.5,-3.75) {\tiny{front}};
\node [rotate=90] at (0.5,1.25) {\tiny{back}};
\end{scope}
\begin{scope}[xshift=5.5cm,rotate=90]
\node at (1,0.5) {(b)};
\draw[thick] (0,-1.250) .. controls ++(0.6,0.4) .. ++(1,1);
\draw[thick] (0,-1.250) -- ++(1.2,0);
\draw[thick] (0,-1.250) .. controls ++(0.6,-0.4) .. ++(1,-1);
\node[align=right] at (1.15,-1.25+1) {\tiny $L$};
\node[align=right] at (1.35,-1.25+0) {\tiny $L$};
\node[align=right] at (1.15,-1.25+-1) {\tiny $D$};
\draw[thick] (0.4,-1.25+0.15) .. controls ++(0.5,0.4) .. ++(0.7,0.7);
\node[align=right] at (1.30,-1.25+0.15+0.7) {\tiny $L$};
\draw[thick] (0.4,-1.25+0.15) .. controls ++(0.6,0.1) .. ++(0.9,0.1);
\node[align=right] at (1.50,-1.25+0.15+0.1) {\tiny $D$};
\draw[thick] (0.4,-1.25+0.15) -- ++(0.8,0.4);
\node[align=right] at (1.40,-1.25+0.15+0.4) {\tiny $L$};
\end{scope}
\begin{scope}[xshift=10cm,rotate=90]
\node at (1,0.5) {(c)};
\draw[thick] (0,-1.250) .. controls ++(0.6,0.4) .. ++(1,1);
\draw[thick] (0,-1.250) -- ++(1.2,0);
\draw[thick] (0,-1.250) .. controls ++(0.6,-0.4) .. ++(1,-1);
\node[align=right] at (1.15,-1.25+1) {\tiny $L$};
\node[align=right] at (1.35,-1.25+0) {\tiny $L$};
\node[align=right] at (1.15,-1.25-1.05) {\tiny $D$};
\draw[thick] (0.4,-1.25-0.15) .. controls ++(0.5,-0.4) .. ++(0.7,-0.7);
\node[align=right] at (1.30,-1.25-0.15-0.7+0.01) {\tiny $D$};
\draw[thick] (0.4,-1.25-0.15) .. controls ++(0.6,-0.1) .. ++(0.9,-0.1);
\node[align=right] at (1.50,-1.25-0.15-0.1+0.01) {\tiny $L$};
\draw[thick] (0.4,-1.25-0.15) -- ++(0.8,-0.4);
\node[align=right] at (1.40,-1.25-0.15-0.4) {\tiny $L$};
\end{scope}
\begin{scope}[xshift=0cm,yshift=-1.75cm,scale=0.6]
\draw[gray,thin] (0,0) grid (4,2);
\draw[very thick] (4,2) -- (4,1) -- (2,1) -- (2,0) -- (0,0);
\draw[gray,thin,dashed] (0,0) -- (4,2);
\foreach \x/\y in {4/2,4/1,3/1,2/1,2/0,1/0,0/0}{
	\draw[fill=black] (\x,\y) circle (0.5ex);
	}
\end{scope}
\begin{scope}[xshift=5.5cm,yshift=-1.75cm,scale=0.6]
\draw[gray,thin] (0,0) grid (4,2);
\draw[very thick] (4,2) -- (4,1) -- (3,1) -- (3,0) -- (0,0);
\draw[gray,thin,dashed] (0,0) -- (4,2);
\foreach \x/\y in {4/2,4/1,3/1,3/0,2/0,1/0,0/0}{
	\draw[fill=black] (\x,\y) circle (0.5ex);
	}
\end{scope}
\begin{scope}[xshift=10cm,yshift=-1.75cm,scale=0.6]
\draw[gray,thin] (0,0) grid (4,2);
\draw[very thick] (4,2) -- (4,0) -- (0,0);
\draw[gray,thin,dashed] (0,0) -- (4,2);
\foreach \x/\y in {4/2,4/1,4/0,3/0,2/0,1/0,0/0}{
	\draw[fill=black] (\x,\y) circle (0.5ex);
	}
\end{scope}
\end{tikzpicture}
\end{center}
\caption{
Illustration of the bijection between arrangements of Manila folders and paths through the lattice.\label{fig:bij:illustation}}
\end{figure}
}
\title{Knuth's big-chooser matchbox process: the case of many matchboxes}
\author{Mark Dukes and Andrew Mullins}
\address{School of Mathematics and Statistics, University College Dublin, Dublin 4, Ireland.}
\email{mark.dukes@ucd.ie}
\email{andrew.mullins@ucdconnect.ie}
\keywords{Banach's matchbox problem; 
Bernoulli process; 
big-chooser; 
Raney number; asymptotics; mean first passage time; manila folder}
\subjclass[2020]{05A15; 05A19; 60C05.}
\begin{document}
\begingroup
\def\uppercasenonmath#1{} 
\let\MakeUppercase\relax 
\maketitle
\endgroup

\begin{abstract}
Banach's matchbox problem considers the setting of two matchboxes that each initially contain the same number of matches.
Boxes are chosen with equal probability and a match removed each time. 
The problem concerns the law of the number of matches remaining in one box once the other box empties.
Knuth considered a generalization of this problem whereby \textit{big-choosers} arrive with probability $p$ and 
remove a match from the box with the most number remaining, and \textit{little-choosers} arrive with probability $1-p$ and remove a match from the box with the least number remaining.

In this paper we consider Knuth's generalization for the case of $k$ matchboxes in which there are \textit{big-choosers} and \textit{little-choosers}. 
We determine the generating function for the expected number of matches remaining in $k-1$ matchboxes once a box first empties, a quantity we refer to as the `residue'.
Interestingly, this generating function is a quotient whose denominator contains a generating function for a special case of the Raney numbers. 
The form for this generating function allows us to give an expression for the expected residue in terms of a sum that involves diagonal state return probabilities, 
where a diagonal state is a configuration in which all matchboxes each contain the same number of matches.
We use analytic techniques to determine the asymptotic behaviour of this expected value for all values of $p$, which involves the study of an asymmetric random walk.

In addition to this we consider the expected value of the order of the first return to a diagonal state and determine the asymptotic behaviour of this quantity.
The coefficients of the diagonal state probability generating function are shown to be related to `manila folder configurations in a filing cabinet', and we make this connection precise. 
This allows us to use known results for the enumeration of such manila folder configurations to give a closed form expression for the diagonal state return probabilities.
\end{abstract}

\tableofcontents

\section{Introduction}\label{background}
Banach's matchbox problem~\cite[Sec. 8]{feller} considered two matchboxes each initially containing $n$ matches.
At every time-step a box is selected at random and a match removed. 
The problem is to determine the probability that one box contains a prescribed number of matches at the moment the other box empties. 
In his playfully entitled `The Toilet Paper Problem' paper, Knuth~\cite{knuth} generalized this problem by introducing a choice-rule relating to the box\footnote{The setting of Knuth's generalization considered two rolls of toilet paper, each initially containing $n$ sheets, instead of matchboxes. We choose to use the matchbox terminology.} 
containing the most number of matches.
In his generalization, at every time-step the box with the most number of matches remaining is selected with probability $p$ and a match removed, while the other box is selected with probability $q=1-p$. 
He showed that the expected number of matches remaining in one box once the other box empties (a quantity he refers to as the {\it{residue}}) is:
\begin{equation*}
n-\sum_{i=1}^{n-1} (n-i) c_i p^iq^{i-1},
\end{equation*}
where $c_i = \tfrac{1}{i}\binom{2(i-1)}{i-1}$ is the $i^{\text{th}}$ Catalan number.\footnote{We use the Catalan numbers as defined in \cite{knuth}, whereas the modern convention is to refer to $c_{n+1}$ as the $n$th Catalan number.}
This setting was further considered by Stirzaker~\cite{stirzaker} wherein he elegantly derived Knuth's results by framing the problem as a simple random walk on the integers.
In addition to the expected value, Stirzaker was able to give a closed form for the probability generating function and extend the results to the case where the two matchboxes can initially contain a 
different number of matches.
Separately, Duffy and Dukes~\cite{duffydukes} studied the large deviations of the residue.
The limiting behaviour of the processes is discussed in Wallner~\cite[Sec. 3.5]{wallner}.
Related work includes the papers of  Guti\'errez and Subercaseaux~\cite{gs}, Cacoullos~\cite{cacoullos}, Lengyel~\cite{lengyel}, Stadje~\cite{stadje}, and Keane et al.~\cite{keane}.
In addition to this, we find it interesting to note that Prodinger~\cite{prodinger} used the kernel method to derive the generating function for the expected residue that was originally derived in \cite{knuth}.

In this paper we consider Knuth's generalization of Banach's matchbox problem for the case of $k$ matchboxes.
The $k$ matchboxes each initially contain $n$ matches. 
People arrive in succession and choose a match from one of the boxes. 
There are two types of people: {\it{big-choosers}} and {\it{little-choosers}}. 
A big-chooser takes a match from a box with the most number of matches remaining, 
while a little-chooser takes a match from a box with the least number remaining.
In this context, it is not significant how one deals with the event that there are multiple boxes from which a big- or little-chooser may choose.
We suppose that the probability of a person being a big-chooser is $p \in (0,1)$ and 
the probability of a person being a little-chooser is $q=1-p$.
Given this setup, we will consider the expected (aggregate) number of matches in the $k-1$ non-empty boxes at the moment that the remaining box first becomes empty, and we will call this aggregate quantity the {\it{residue}}, in keeping with Knuth's terminology.

The state of the $k$ matchboxes any time is represented by a $k$-tuple $(a_1,a_2,\ldots,a_k)$ with $a_1\geq a_2\geq \ldots  \geq a_k$.
The approach we take is more closely aligned with Knuth's than with Stirzaker's.
While Stirzaker's solution was arguably more succinct, unfortunately for us one of the key features of a simple random walk which Stirzaker was able to exploit, 
namely that a walk from points $a$ to $b$ must visit every intervening point en route, is not true of more general walks where the steps to the left and right are not of equal length. 
Indeed, if a walk starts at point $a$ and takes steps of $+2$ (with probability $q$) and $-1$ (with probability $1-q$), 
then it is possible (for non-trivial $q$) for walks to avoid any specified intervening point en route to $b$.

In Theorem~\ref{gf:one} we determine the generating function for the expected residue.
Interestingly, this generating function is a quotient whose denominator contains a generating function for a special case of the Raney numbers. 
The form for this generating function allows us to give, in Proposition~\ref{twofive}, an expression for the expected residue in terms of a sum that involves {\em diagonal state} return probabilities, 
where a diagonal state is a matchbox configuration in which all matchboxes each contain the same number of matches.
We use analytic techniques to determine, in Theorem~\ref{asymptotics_theorem}, the asymptotic behaviour of this expected residue for all values of $p$, 
and this involves the study of an asymmetric random walk.

In addition to this, in Theorem~\ref{thm:expectedreturn}, we present both the generating function and an expression for the expected order of the first return to a diagonal state.
The asymptotic behaviour of this quantity is determined in Proposition~\ref{five:asympt}.
The coefficients of the diagonal state probability generating function are shown to be related to manila folder configurations in a filing cabinet, and we make this connection precise in Section~\ref{sec:five:two}. 
This allows us to use the known enumeration of such manila folder configurations to give a closed form expression for the diagonal state probabilities.

\section{A recursion and diagonal-state numbers} \label{sec:two}
\subsection{A recursion for the expected residue} 

Let $X^{(k)}_{n}(p)$ be the random variable that is the sum of the number of matches in the $k-1$ larger boxes at the moment the $k$th box empties, as defined in Section~\ref{background}.
We refer to this quantity as the {\it{residue}}.
Let $M^{(k)}_{n}(p)$ be the expected value of $X^{(k)}_{n}(p)$, i.e. the expected residue.
Notice that $X^{(k)}_{n}(p)$ takes values in the interval $[k-1,(k-1)n]:=\{k-1,\ldots,(k-1)n\}$, and so $M^{(k)}_{n}(p)$ also takes values in this range.
For the case $k=3$, the first few values are easy to calculate.
The value $M^{(3)}_{1}(p) = 2$ since the first person empties the third box regardless of what type of chooser they are. 

In considering $X^{(3)}_{2}{(p)}$, the state of the system goes from $(2,2,2) \to (2,2,1)$ with probability 1, 
and we represent this by writing $(2,2,2) \toprob{1} (2,2,1)$.
From there, $(2,2,1) \toprob{q} (2,2,0)$ so that $\P(X^{(3)}_{2}(p)=4)=q$, and alternatively $(2,2,1) \toprob{p} (2,1,1)$.
Subsequently, there are two possibilities for what happens until we have an empty box:
$(2,1,1) \toprob{p} (1,1,1) \toprob{1} (1,1,0)$, which results in $X^{(3)}_{2}(p)=2$, and 
$(2,1,1) \toprob{q} (2,1,0)$, which results in $X^{(3)}_2(p)=3$. 
It follows that $M^{(3)}_{2}(p) = 4q+p(2p+3q) = 4-p-p^2$. 
Figure~\ref{diag:m3} illustrates the experimental value of $M^{(3)}_{100}(p)$ as a function of $p$.

During the time up until a box first empties, a time that we will to refer to as the {\em first phase}, 
the expected value can be calculated by recursions very similar to those stated by Knuth for the two-box problem. 
We have $M^{(k)}_n(p) = \Mean^{(k)}_{(k-1)n,n}(p)$ where $\Mean^{(k)}_{a,b}(p)$
is the expected value of the residue, given that the $k-1$ larger boxes collectively contain $a$ matches and the smallest box contains $b$ matches. 
We may calculate $\Mean^{(k)}_{a,b}(p)$ recursively using:
\begin{align} 
\label{sr1} \Mean^{(k)}_{a,0}(p) &= a;\\
 \label{sr2} \Mean^{(k)}_{(k-1)a,a}(p) &= \Mean^{(k)}_{(k-1)a,a-1}(p) & \mbox{ if } a>0; \\
\Mean^{(k)}_{a,b}(p) &= p \cdot \Mean^{(k)}_{a-1,b}(p) + q\cdot \Mean^{(k)}_{a,b-1} & \mbox{ if }a>(k-1)b>0. \label{sr3}
\end{align}
The above recursions have straightforward explanations. The first is trivial. 
The second holds since $(a,a,\ldots,a,a) \toprob{1} (a,a,\ldots,a,a-1)$. 
The third holds by conditional expectation.

\begin{figure}[!h]
\begin{center}
\includegraphics[scale=0.75]{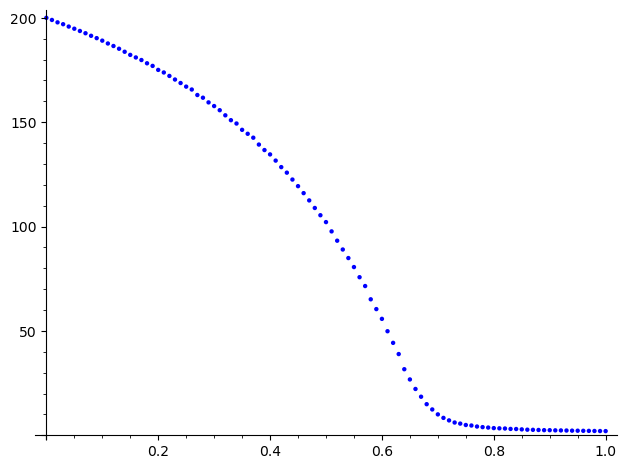}
\end{center}
\caption{The experimental value of $M^{(3)}_{100}(p)$. The horizontal axis represents the value of $p$ while the vertical axis represents $M^{(3)}_{100}(p)$.\label{diag:m3}}
\end{figure}

The solution to the recursions above depends on some numbers and power series that feature throughout this paper, so we will first present those.

\subsection{Diagonal state returns and the Raney numbers} \label{sec:diagonal} \label{sec:three}
We call a state in which all $k$ matchboxes have the same number of matches a {\em diagonal state}.
Consider the event that there are no diagonal states encountered in between the diagonal states 
$(n,n,\ldots,n)$ and $(n-i,n-i,\ldots,n-i)$. We will refer to such an event as a {\em first diagonal return of order $i$}, or simply a {\em first diagonal return}.
Let us denote by $s^{(k)}_i$ the number of ways in which this can happen.

The probability weight associated with a first diagonal return of order $i$ is $p^{(k-1)i} q^{i-1} = \tfrac{1}{q} (p^{k-1}q)^i$.
Notice that up until the first box empties, the difference in size between the largest and second smallest boxes can be no greater than one. 
The reason for this behaviour is that big-choosers have a leveling effect on the $(k-1)$ non-smallest boxes.
By considering the aggregate of the $k-1$ larger boxes, $s^{(k)}_i$ is equal to the number of lattice paths 
from $((k-1)n,n)$ to $((k-1)(n-i),n-i)$ that take unit south and west steps, and lie strictly below the 
$$\mbox{boundary line: }(k-1)y-x=0,$$ except at the two endpoints.

Paths through the first quadrant of the lattice that are weakly bounded above by the boundary line 
are equivalent to ballot problems. 
Dvoretzky and Motzkin~\cite{dvor} established the following result about a ballot in which $a$ votes are cast
for candidate $A$ and $b$ votes are cast for candidate $B$:
\medskip

\begin{center}
\begin{minipage}{0.8\textwidth}
\it{If $\alpha$ is an integer satisfying $0 < \alpha < \frac{a}{b}$ , 
then the probability that throughout the counting the number
of votes registered for $A$ is always greater than $\alpha$ times the number 
of votes registered for candidate $B$ equals $\frac{a-\alpha b}{a+b}$.}
\end{minipage}
\end{center}

In our case, any path from $((k-1)n, n)$ to $((k-1)(n-i), n-i)$ lying below the given boundary line must start with a step from $((k-1)n, n)$ to $((k-1)n,n-1)$. 
There are $\binom{(k-1)i + i-1 }{i-1} = \binom{ki-1}{i-1}$ unbounded paths through the lattice from $((k-1)n,n - 1)$ to $((k-1)(n-i), n-i)$.
Dvoretzky and Motzkin's theorem tells us, by setting {$a = (k-1)i$, $b = i-1$ and $\alpha = k-1$}, 
that the probability of choosing one that lies strictly below $(k-1)y - x = 0$ is $k-1/(ki-1)$.
Thus the number of such paths is 
$$s^{(k)}_i =\frac{k-1}{ki-1}\binom{ki-1}{i-1} = \frac{k-1}{k(i-1)+k-1}\binom{k(i-1)+k-1}{i-1}.$$

This form of a binomial expression is well-known in enumerative combinatorics and represents a special case of the Raney numbers~\cite{raney}:
$$\Raney_{c,r}(\ell) = \dfrac{r}{\ell c+r} \binom{\ell c+r}{\ell}.$$
These numbers have several combinatorial interpretations and are also sometimes referred to as the two-parameter Fuss-Catalan numbers. 
For example, 
$\Raney_{c,r}(\ell)$
is the number of sequences $(a_1,\ldots,a_{\ell c +r})$ where $a_i \in \{1,1-c\}$ with all partial sums $a_1+\ldots+a_j$ positive and $a_1+\ldots+a_{\ell c +r}=r$.  
The numbers in which we are interested are the special case:
$$s^{(k)}_i = \Raney_{k,k-1}(i-1).$$
That our numbers are instances of the Raney numbers allows us to make use of identities that they satisfy. 
Interesting aspects of these numbers can be found in Graham et al.~\cite[Sec. 7.5, Example 5]{concretemath}, Hilton and Pedersen~\cite{hilton}, and Zhou and Yan~\cite{raney2}.

Define the generating function
$$S^{(k)}(z) := \sum_{n\geq 1} s^{(k)}_n z^n.$$
It is known, see for example Liszewska and M\l otkowski~\cite[Sec. 9.2]{LM},
that $S^{(k)}(z)$ satisfies 
\begin{equation}\label{sgeneratingfunction}
S^{(k)}(z) = \dfrac{z}{(1-S^{(k)}(z))^{k-1}}.
\end{equation}
The first few such power series are:
\begin{align*}
S^{(2)}(z) &= \sum_{n\geq 1} \frac{1}{n}\binom{2(n-1)}{n-1} z^n = z + z^2 + 2z^3 + 5z^4 + 14z^5+42z^6+\ldots \\
S^{(3)}(z) &= \sum_{n\geq 1} \frac{1}{n}\binom{3(n-1)+1}{n-1}z^n    =z+2z^2+7z^3+ 30z^4 + 143z^5+728z^6 + \ldots\\
S^{(4)}(z) &= \sum_{n\geq 1} \frac{1}{n}\binom{4(n-1)+2}{n-1}z^n =  z+ 3z^2 + 15z^3+ 91z^4+ 612z^5 + 4389z^6 + \ldots
\end{align*}
The coefficients of $S^{(2)}(z)$ are the Catalan numbers and $S^{(2)}(z) = (1-\sqrt{1-4z})/2$. An exact expression for $S^{(3)}(z)$ is also known 
$$S^{(3)}(z) = \dfrac{4}{3} \sin^2\left( \frac{1}{3} \arcsin \sqrt{\dfrac{27z}{4}} \right),$$
see e.g. Gessell~\cite[Lemma 3.2]{gessell}.
Both are easily seen to satisfy equation (\ref{sgeneratingfunction}).

As we mentioned above, the probability associated with one of the paths counted by $s^{(k)}_i$ is $p^{(k-1)i}q^{i-1}$, 
and so we will be interested in
the probability generating function
 $$\sum_{i\geq 1} s^{(k)}_i p^{(k-1)i}q^{i-1}z^i = \tfrac{1}{q} S^{(k)}(p^{k-1}qz).$$
The quotient $1/(1-\frac{1}{q}S^{(k)}(p^{k-1}qz))$ will also appear frequently in this paper so we discuss it briefly here. 
Let us write
\begin{align}
f^{(k)}(z) 	= f_0^{(k)}+f_1^{(k)}z+f_2^{(k)}z^2+\dots 
			:= \dfrac{1}{1-\frac{1}{q}S^{(k)}(p^{k-1}qz)}. \label{f:defn}
\end{align}
The function $\frac{1}{q}S^{(k)}(p^{k-1}qz)$ is the probability generating function for the order of the first return to a diagonal state, 
i.e. the coefficient of $z^i$ is the probability that the first return to a diagonal state occurs after $ki$ steps.
Consequently the quotient
$1/\left(1-\frac{1}{q}S^{(k)}(p^{k-1}qz)\right)$ is the probability generating function for 
the event of being in a diagonal state after some multiple of $k$ steps, but not necessarily for the first time.
See e.g. Grimmett and Stirzaker's book~\cite[Sec. 5.3, Thm. 1(a)]{gsbook}. 

Given the discussion so far about first-quadrant lattice paths from $((k-1)n,n)$ to the origin (and other points) that lie weakly below the boundary line,
it would be sensible to introduce some terminology related to this.
Let $\Paths_k(n)$ be the set of all those paths from $((k-1)n,n) \mapsto (0,0)$ that take unit south and west steps and lie in the first quadrant weakly below the boundary line
$(k-1)y-x=0$.
Let $\Paths_k(n,i)$ be the subset of those paths in $\Paths_k(n)$ that return to the boundary line precisely $i$ times between the two endpoints.
Let $\Paths_k : = \{\epsilon\} \cup_{n \geq 1} \Paths_k(n)$, where $\epsilon$ is the empty path.
Notice that members of all of the above sets can be represented by words in $\{D,L\}^{*}$ where $D$ corresponds to a unit south step and $L$ corresponds to a unit west step.
Finally, let $\paths_k(n) = |\Paths_k(n)|$ and $\paths_k(n,i) = |\Paths_k(n,i)|$.

\section{The expected residue}
We are now in a position to solve the recurrence stated in equations (\ref{sr1})--(\ref{sr3}).
First we will prove a recursive expression for the expected residue $M^{(k)}_n(p)$.
We will then use that recursion to give an expression for the generating function $M^{(k)}(z)$ for the numbers $\left(M^{(k)}_n(p)\right)_{n\geq 1}$.
The generating function has a pleasing form and it is possible to extract the coefficient of $z^n$ in order to give a non-recursive expression
for $M^{(k)}_n(p)$ that involves only numbers that appear in Subsection~\ref{sec:three}.

\begin{lemma}\label{lemma:one}
$M^{(k)}_1(p)=k-1$.
For all $n\geq 2$ we have
\begin{align}\label{recursion:one}
M^{(k)}_n(p) = 
	\frac{1}{q} \sum_{0<i<n} s^{(k)}_i  (p^{k-1}q)^i M^{(k)}_{n-i}(p)
	~+~ L^{(k)}_n(p),
\end{align}
where
\begin{align*}
L^{(k)}_n(p) &= (p^{k-1}q)^n \frac{1}{q}\displaystyle\sum_{j=k}^{(k-1)n} j d^{(k)}_{n,j}({1}/{p})^j
\end{align*}
and 
$$d^{(k)}_{n,j}=
\begin{cases}
\displaystyle\binom{kn-2-j}{n-2} - \sum_{\ell=1}^{n-\left\lceil \frac{j}{k-1} \right\rceil} s^{(k)}_{\ell} \cdot \binom{k(n-\ell)-1-j}{n-\ell-1} & \mbox{ if $ k \leq j \leq (k-1)n$} \\
0 & \mbox{ otherwise.}
\end{cases}
$$
\end{lemma}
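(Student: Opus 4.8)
The plan is to reinterpret $\Mean^{(k)}_{a,b}(p)$ — and hence $M^{(k)}_n(p)=\Mean^{(k)}_{(k-1)n,n}(p)$ — as a weighted sum over the lattice paths of Section~\ref{sec:two}. Unrolling the recursions (\ref{sr1})--(\ref{sr3}) shows that $\Mean^{(k)}_{a,b}(p)=\sum_P\operatorname{wt}(P)\,x(P)$, where $P$ runs over all paths starting at $(a,b)$ that take unit west steps (weight $p$) and unit south steps (weight $q$), except that from any point on the boundary line $(k-1)y-x=0$ the next step is a forced south step of weight $1$ — this is precisely (\ref{sr2}) — and that stop the first time the $y$-coordinate reaches $0$, with $x(P)$ the terminal $x$-coordinate (the exit value of (\ref{sr1})); here $\operatorname{wt}(P)$ is the product of the step weights. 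In particular $M^{(k)}_n(p)$ is this weighted sum over paths from $((k-1)n,n)$. The case $n=1$ is immediate, the only such path being the forced step $((k-1),1)\to((k-1),0)$, so $M^{(k)}_1(p)=k-1$.

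For $n\ge2$ I would condition a path on whether, after its initial forced step to $((k-1)n,n-1)$, it ever returns to the boundary line at a point with $y$-coordinate in $\{1,\dots,n-1\}$. If the first such return has order $i$, landing at $((k-1)(n-i),n-i)$, then the prefix up to that point is one of the $s^{(k)}_i$ paths counted in Section~\ref{sec:diagonal}, each of weight $p^{(k-1)i}q^{i-1}=\tfrac1q(p^{k-1}q)^i$ (the exponent $i-1$ on $q$ rather than $i$ records the forced first step), while the suffix, which again starts from a diagonal state, contributes $M^{(k)}_{n-i}(p)$ because step weights are multiplicative and $x(P)$ depends only on the suffix. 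Summing over $i$ yields the first term of (\ref{recursion:one}). The remaining paths stay strictly below the boundary line from $((k-1)n,n-1)$ until they exit at some $(j,0)$, and their total contribution is by definition $L^{(k)}_n(p)$; it remains to evaluate it.

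For that, I would split each such path as a strictly-below path from $((k-1)n,n-1)$ to $(j,1)$ followed by the final south step $(j,1)\to(j,0)$, so its weight is $p^{(k-1)n-j}q^{n-1}$ ($(k-1)n-j$ west steps and $n-1$ non-initial south steps). Hence $L^{(k)}_n(p)=\sum_j j\,d^{(k)}_{n,j}\,p^{(k-1)n-j}q^{n-1}=(p^{k-1}q)^n\tfrac1q\sum_j j\,d^{(k)}_{n,j}(1/p)^j$, where $d^{(k)}_{n,j}$ is the number of west/south lattice paths from $((k-1)n,n-1)$ to $(j,1)$ that stay strictly below $(k-1)y-x=0$. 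A west step changes $(k-1)y-x$ by $+1$ and a south step by $-(k-1)$, so a path cannot pass from below the line to above it without first landing on it; therefore $d^{(k)}_{n,j}$ equals the number $\binom{kn-2-j}{n-2}$ of \emph{unrestricted} such paths minus the number that touch the line. I would count the latter by their first touch $((k-1)m,m)$: the prefix up to that touch is strictly below the line except at its endpoint, hence counted by $s^{(k)}_{n-m}$ (the translation-invariance of the boundary line makes this count independent of $n$), while the suffix from $((k-1)m,m)$ to $(j,1)$ is arbitrary, counted by $\binom{k(n-\ell)-1-j}{n-\ell-1}$ with $\ell=n-m$. The admissible values $\lceil j/(k-1)\rceil\le m\le n-1$ translate into $1\le\ell\le n-\lceil j/(k-1)\rceil$, giving the stated formula on $k\le j\le(k-1)n$; for $j$ outside this range no admissible path exists (the penultimate point $(j,1)$ would lie on or above the line, or $j$ exceeds the starting aggregate), so $d^{(k)}_{n,j}=0$ there. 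Assembling the two contributions gives (\ref{recursion:one}).

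The main obstacle is not any single hard computation but the bookkeeping in this translation: getting the weights of the forced initial and final south steps right (so the powers of $q$ come out as $q^{i-1}$ and $q^{n-1}$, not $q^i$, $q^n$), applying the first-return (strong Markov) decomposition correctly, and nailing the edge cases for the range of $j$ and the first-touch summation. Once the path model and the first-passage decomposition are in place, the identity follows.
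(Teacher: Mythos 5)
Your proposal is correct and follows essentially the same route as the paper: a first-return-to-diagonal decomposition giving the $\tfrac1q s^{(k)}_i(p^{k-1}q)^iM^{(k)}_{n-i}(p)$ terms, plus an inclusion--exclusion count of the no-return paths ending at $(j,0)$ (total paths $\binom{kn-2-j}{n-2}$ minus those touching the boundary, split at the first touch via $s^{(k)}_\ell$), with the same weight bookkeeping $p^{(k-1)n-j}q^{n-1}$. Your treatment is in fact slightly more explicit than the paper's on two minor points it leaves implicit: why a path cannot cross the boundary line without landing on it, and the exact range $1\le\ell\le n-\lceil j/(k-1)\rceil$ of the correction sum.
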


\begin{proof}
The recurrences in equations (\ref{sr1})--(\ref{sr3}) provide a means to calculate $M^{(k)}_n(p)$ for general $n$. 
The expression for $M^{(k)}_1(p)$ follows directly from these.
One systematic way to evaluate $M^{(k)}_n(p) = M^{(k)}_{(k-1)n,n}(p)$ for $n\geq 2$ is to apply the recursion to 
all terms except those terms of the form $\Mean^{(k)}_{(k-1)a,a}(p)$ for $a<n$. 
For example we can write:
\begin{align*}
M^{(3)}_2(p) &=  p^2 M^{(3)}_1(p) + (3pq + 4q) \\
M^{(3)}_3(p) &=  p^2 M^{(3)}_{2}(p) + 2p^4q  M^{(3)}_{1}(p)  ~ + ~ (6p^3q^2  +8p^2q^2 + 10pq^2  + 6 q^2).
\end{align*}
Doing this for general $n$ will result in an expression for $M^{(k)}_n(p)$ of the following type:
\begin{align*}
M^{(k)}_n(p) = \Mean_{(k-1)n,n}(p) &= \sum_{i=1}^{n-1} \alpha^{(k)}_{n,i}(p) M_{(k-1)(n-i),n-i}(p) ~+~ \beta^{(k)}_n(p) \\
&= \sum_{i=1}^{n-1} \alpha^{(k)}_{n,i}(p) M^{(k)}_{n-i}(p) ~+~ \beta^{(k)}_n(p),
\end{align*}
where $\alpha^{(k)}_{n,i}(p)$ and $\beta^{(k)}_n(p)$ are yet-to-be determined functions.
One may easily observe that, due to the form of the recursions, $\alpha_{n,i}(p)$ is a function of $i$, $k$, and $p$, but not of $n$.
The quantity $\alpha_{n,i}(p)$ is the probability of a chooser process that returns to a `diagonal' state $(x,x,\ldots,x)$, 
for the first time, at $(n-i,n-i,\ldots,n-i)$. From Subsection~\ref{sec:diagonal} this probability is
$\alpha^{(k)}_i(p) = s^{(k)}_i (p^{k-1}q)^i / q$.
Thus we have 
\begin{align}\label{sofarsogood}
M^{(k)}_n(p) 
&= \tfrac{1}{q}\sum_{i=1}^{n-1} s^{(k)}_i(p^{k-1}q)^i M^{(k)}_{n-i}(p) ~+~ \beta^{(k)}_n(p).
\end{align}

The first term on the right hand side of equation (\ref{sofarsogood}) represents the expected residue amongst those chooser processes 
that cause a return to some diagonal state $(n-i,n-i,\ldots,n-i)$ before the first matchbox empties. 
The second term represents those that do not encounter a diagonal state.
Therefore we can write 
$$\beta^{(k)}_n(p) = \sum_{j} j d^{(k)}_{n,j} p^{(k-1)n-j} q^{n-1}$$
where $d^{(k)}_{n,j}$ is the number of lattice paths from $((k-1)n,n-1) \to \cdots \to (j,1) \to (j,0)$ that do not touch the boundary line $(k-1)y-x=0$.

Consider the set of all paths from $((k-1)n,n-1) \to \cdots \to (j,1) \to (j,0)$.
The number of such paths is $\binom{((k-1)n-j) + (n-2)}{n-2} = \binom{kn-2-j}{n-2}$.
Let us call a path in this set {\it{good}} if it does not touch the boundary line $(k-1)y-x=0$, and {\it{bad}} otherwise.
Every bad path is characterized by the path returning to the boundary line before reaching $(j,1)$. 
Suppose this happens for a bad path for the first time at $((k-1)(n-\ell),n-\ell)$ for some $\ell\geq 1$. 
The number of ways a path can go from $((k-1)n,n-1) \to \cdots \to ((k-1)(n-\ell),n-\ell)$ is $s_{\ell}$ and, since it is a bad path, 
there are no further restrictions on the steps from $((k-1)(n-\ell),n-\ell)$ to $(j,1)$. 
That latter number is $\binom{(k-1)(n-\ell)-j + n-\ell-1}{n-\ell-1} = \binom{k(n-\ell)-j-1}{n-\ell-1}$. 
Therefore the number of good paths is 
$$d^{(k)}_{n,j} = \binom{kn-2-j}{n-2} - \sum_{\ell \geq 1} s^{(k)}_{\ell} \cdot  \binom{(k-1)(n-\ell)-j-1}{n-\ell-1}.\qedhere$$
\end{proof}

Let 
\begin{align*}
M^{(k)}(z) = \sum_{n\geq 1} M^{(k)}_n(p) z^n  \qquad \mbox{ and } \qquad
L^{(k)}(z) = \sum_{n\geq 2} L^{(k)}_n(p) z^n. 
\end{align*}
Note the starting index of $L^{(k)}(z)$ is 2.
This is because $L_n^{(k)}(p)$ denotes the contribution to $M^{(k)}_n(p)$ of paths which have no diagonal return, and there can be no such paths in the case $n=1$.

\begin{lemma}\label{expression:for:L}
Let $k\geq 2$. Then 
$$L^{(k)}(z) =
 \displaystyle \dfrac{z(kp-(k-1)) S^{(k)}(p^{k-1}qz) - qz^2((2k-1)p-(2k-2)+(k-1)(qz))}{q^2(1-z)^2}. $$
\end{lemma}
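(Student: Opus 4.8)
The plan is to extract a closed form for $L^{(k)}(z) = \sum_{n\geq 2} L^{(k)}_n(p)z^n$ directly from the formula for $L^{(k)}_n(p)$ in Lemma~\ref{lemma:one}. Recall
$$L^{(k)}_n(p) = \frac{1}{q}(p^{k-1}q)^n \sum_{j=k}^{(k-1)n} j\, d^{(k)}_{n,j}(1/p)^j,$$
so that $(p^{k-1}q)^n (1/p)^j = p^{(k-1)n-j}q^n$, and $d^{(k)}_{n,j}$ splits as a binomial term minus a convolution term involving $s^{(k)}_\ell$. The key observation is that each of the two pieces of $d^{(k)}_{n,j}$ contributes a sum that I would recognize, after weighting by $j$ and summing over $j$ and $n$, as a (differentiated) geometric-type series in $z$ together with a factor built from $S^{(k)}(p^{k-1}qz)$. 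So I would write $L^{(k)}(z) = A(z) - B(z)$, where $A(z)$ comes from the $\binom{kn-2-j}{n-2}$ term and $B(z)$ from the convolution term.

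For the $A$-term: substituting $m = (k-1)n - j$ (the index of $p$), the inner sum becomes $\sum_m ((k-1)n-m)\binom{kn-2-j}{n-2} p^m q^n z^n$ over the appropriate range of $m$. The binomial coefficient $\binom{kn-2-j}{n-2}$ with $j = (k-1)n - m$ equals $\binom{m+n-2}{n-2}$, which is the coefficient extraction one does for $1/(1-p)^{?}$ type generating functions; summing $\binom{m+n-2}{n-2}p^m$ over $m\ge 0$ gives $1/(1-p)^{n-1}$ and summing $m\binom{m+n-2}{n-2}p^m$ gives $(n-1)p/(1-p)^n$. Thus the doubly-indexed sum collapses to a sum over $n$ of terms like $q^n z^n \cdot [\text{linear in } n]/(1-p)^{n-1}$; since $q = 1-p$, the powers of $(1-p)$ cancel cleanly and one is left with $\sum_{n\ge 2}(\alpha n + \beta) q z^n$ for explicit constants $\alpha,\beta$ depending on $k,p$, which sums to a rational function with denominator $(1-z)^2$.

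For the $B$-term: here the convolution structure $\sum_\ell s^{(k)}_\ell \binom{k(n-\ell)-1-j}{n-\ell-1}$ means that, after the same change of variables, $B(z)$ factors as a product of $\tfrac1q S^{(k)}(p^{k-1}qz)$ (the generating function $\sum_\ell s^{(k)}_\ell (p^{k-1}q)^\ell z^\ell$, up to the $1/q$) with a second factor coming from summing the remaining binomial against $p^\bullet q^\bullet z^\bullet$, which by the same identities as in the $A$-term is again rational with denominator a power of $(1-z)$. The main obstacle I anticipate is bookkeeping: getting the summation ranges exactly right (the lower limit $j\ge k$, the ceiling $\lceil j/(k-1)\rceil$ in the upper limit of the $\ell$-sum, and the starting index $n\ge 2$), and making sure the $j$-weight is carried through both pieces consistently so that the ``$j$'' becomes ``$(k-1)n - m$'' and the linear-in-$n$ and constant parts are tracked correctly. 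Once $A(z)$ and $B(z)$ are each written as rational functions, combining over the common denominator $q^2(1-z)^2$, using $S^{(k)}(p^{k-1}qz)$ to absorb the convolution factor, and simplifying the numerator polynomial in $p$ and $z$ should yield exactly
$$\frac{z(kp-(k-1))S^{(k)}(p^{k-1}qz) - qz^2\big((2k-1)p-(2k-2)+(k-1)(qz)\big)}{q^2(1-z)^2},$$
which I would finally sanity-check against the $k=2$ and $k=3$ low-order data recorded earlier in the paper (e.g. $M^{(3)}_2(p) = 4-p-p^2$, forcing $L^{(3)}_2(p) = 3pq+4q$).
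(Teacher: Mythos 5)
Your route is genuinely different from the paper's: the paper never sums the explicit binomial-minus-convolution formula for $d^{(k)}_{n,j}$, but instead uses the lattice recursion $d^{(k)}_{n,j}=d^{(k)}_{n,j+1}+d^{(k)}_{n-1,j-(k-1)}$, solves a functional equation for the bivariate generating function $D^{(k)}(x,y)$ (via the reversed polynomials $T^{(k)}_n$ and $S^{(k)}$), and produces the weight $j$ by differentiating in $x$ before substituting $x\leftarrow 1/p$, $y\leftarrow p^{k-1}qz$. Your proposed architecture has the right shape: after the change of variable $m=(k-1)n-j$ the binomial piece does contribute a rational function with denominator $(1-z)^2$, and the convolution piece does factor through $\tfrac1q S^{(k)}(p^{k-1}qz)$; indeed, carried out correctly, one gets
\begin{equation*}
L^{(k)}(z)=\sum_{n\ge2}\Bigl((k-1)n-\tfrac{(n-1)p}{q}\Bigr)z^n-\Bigl((k-1)-\tfrac pq\Bigr)\frac{z}{(1-z)^2}\cdot\frac1q S^{(k)}(p^{k-1}qz),
\end{equation*}
which simplifies to exactly the claimed quotient, and your $k=3$ sanity check ($L^{(3)}_2(p)=3pq+4q$) is consistent with it.

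However, as written there is a genuine gap, not mere bookkeeping, at the central step. You replace $\sum_{m}\binom{m+n-2}{n-2}p^m$ and $\sum_m m\binom{m+n-2}{n-2}p^m$ by the full negative-binomial series $1/(1-p)^{n-1}$ and $(n-1)p/(1-p)^{n}$, but the actual range is $0\le m\le (k-1)n-k$ (i.e.\ $j\ge k$), and the $\ell$-sum is likewise truncated by the ceiling. With these truncations, neither piece separately has the closed form you assert: the tails of the binomial piece are precisely what generate algebraic, $S^{(k)}$-type contributions, so the truncated $A(z)$ is not rational and the truncated $B(z)$ is not $\tfrac1qS^{(k)}\times(\text{rational})$; only their difference is nice. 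The missing idea is that for every $j\le k-1$ the two pieces of the formula for $d^{(k)}_{n,j}$ coincide: any path from $((k-1)n,n-1)$ to $(j,1)$ with $j\le k-1$ must pass through a lattice point of the boundary line (a south step cannot cross it, and a run of west steps crossing it passes through the boundary point at that height), so the first-touch decomposition counts all such paths and the binomial equals the convolution there. Hence both sums may be extended over all $j\le (k-1)n$, the added $j$-weighted terms cancelling between the two pieces, and only after this extension is the geometric-series collapse legitimate. Once you supply this cancellation argument (and track the $j$-weight as $(k-1)n-m$, exactly as you indicate), your derivation goes through and is arguably more elementary than the paper's generating-function/differentiation route, at the price of invoking the explicit formula from Lemma~\ref{lemma:one} together with this boundary-crossing observation.
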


\begin{proof}
By definition,
\begin{align*}
L^{(k)}_n(p) & = \sum_{j=k}^{(k-1)n}  j d^{(k)}_{n,j} p^{(k-1)n-j} q^{n-1} 
 = \frac{(p^{k-1}q)^n}{pq} \sum_{j=k}^{(k-1)n}  j d^{(k)}_{n,j} (1/p)^{j-1}.
\end{align*}
The generating function for these is then
\begin{align*}
L^{(k)}(z) &= \sum_{n\geq 2} L^{(k)}_n(p) z^n \\
& =  \sum_{n\geq 2}z^n \left( \frac{(p^{k-1}q)^n}{pq} \sum_{j=k}^{(k-1)n}  j d^{(k)}_{n,j} (p^{-1})^{j-1} \right)  \\
& =  \frac{1}{pq} \sum_{n\geq 2} \left(\sum_{j=k}^{(k-1)n}  j d^{(k)}_{n,j} (p^{-1})^{j-1}\right)  (p^{k-1}qz)^n  \\
& =  \frac{1}{pq} \dfrac{\partial D^{(k)}(x,y)}{\partial x}\Bigg|_{x\leftarrow 1/p \atop y \leftarrow p^{k-1}qz}
\end{align*}
where $D^{(k)}(x,y) :=\sum_{n\geq 2} D^{(k)}_n(x) y^n$ and $D^{(k)}_n(x) := \sum_{j=k}^{(k-1)n}  d^{(k)}_{n,j} x^{j}$.

Any path to $(j,1)$ either passes through $(j,2)$ or through $(j+1,1)$, but not both.
It follows that $d^{(k)}_{n,j} = d^{(k)}_{n,j+1} + d^{(k)}_{n-1,j-(k-1)}$ holds for those values of $n$ and $j$ for which all quantities in the recursion are defined.
It also turns out easier, from a generating function perspective, to reverse the sequence $(d^{(k)}_{n,j})_j$. 
Set $t^{(k)}_{n,j} := d^{(k)}_{n,(k-1)n-j}$ for all 
$0\leq j \leq (k-1)n-k$ and define $T^{(k)}_n(x) := \sum_{j=0}^{(k-1)n-k} t^{(k)}_{n,j} x^j$. 
Note that $D^{(k)}_n(x) = x^{(k-1)n} T_n(1/x)$.
For example, we have
\begin{align*}
T^{(3)}_2(x) &= 1+x \\
T^{(3)}_3(x) &= 1+2x+2x^2+2x^3 \\
T^{(3)}_4(x) &= 1+3x+5x^2+7x^3+7x^4+7x^5\\
T^{(3)}_5(x) &= 1+4x+9x^2+16x^3+23x^4+30x^5+30x^6+30x^7.
\end{align*}
Notice that the expression for $T^{(k)}_2(x)$ for a given value of $k$ is
$$T^{(k)}_2(x) = 1+x+x^2+\ldots+x^{k-2}.$$
The recurrence for $d^{(k)}_{n,j}$ above can be transformed into one for $t^{(k)}_{n,j}$: 
$$t^{(k)}_{n,j}=t^{(k)}_{n,j-1}+t^{(k)}_{n-1,j}.$$ 
From this it is clear that the sequence $(t^{(k)}_{n,j})$, as a sequence over $j$, is the sequence of all partial sums of $(t^{(k)}_{n-1,j})$, 
with the largest term appended to the end of the new sequence $k-1$ times. 
This is because $t^{(k)}_{n,j}$ is only defined for $j\leq (k-1)n-k$.
Notice that the largest value in the sequence $t^{(k)}_{n+1,j}$ for $n$ fixed is $s^{(k)}_n$.
See Figure~\ref{andrews:diagram} for an illustration of why this is the case.
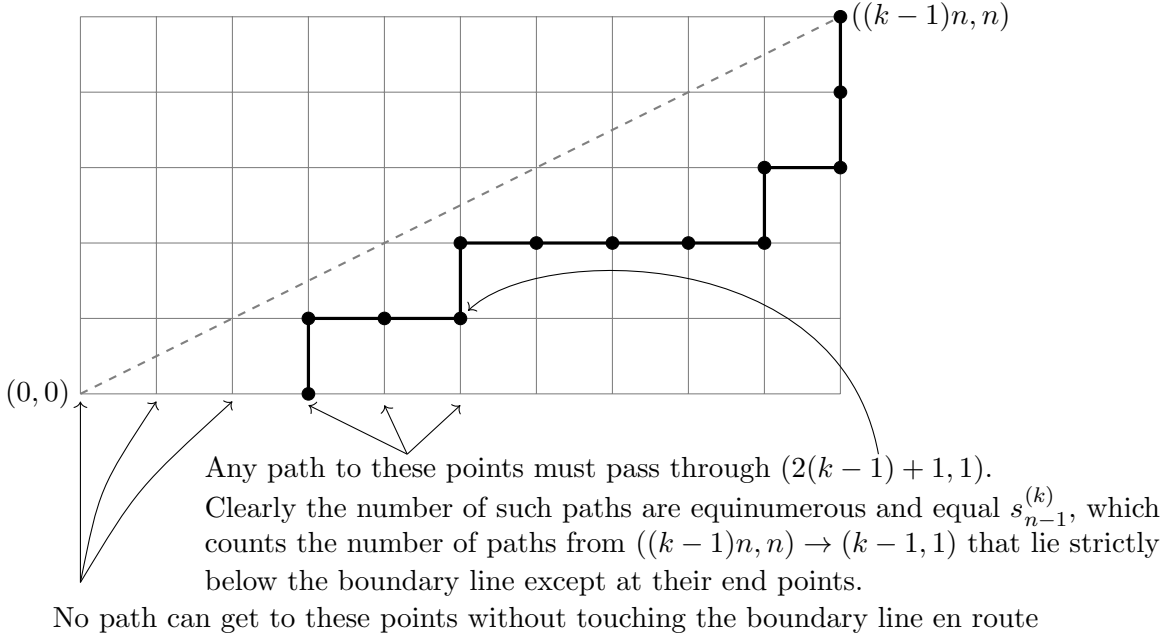
\begin{figure}
\begin{tikzpicture}
\draw[gray,thin] (0,0) grid (10,5);
\draw[very thick] (10,5) -- (10,3) -- (9,3) -- (9,2) -- (8,2) -- (5,2) -- (5,1) -- (3,1) -- (3,0);
\draw[gray,thick,dashed] (0,0) -- (10,5);
\foreach \x/\y in {10/5,10/4,10/3,9/3,9/2,8/2,7/2,6/2,5/2,5/1,4/1,3/1,3/0}{
    \draw[fill=black] (\x,\y) circle (0.5ex);
    }
\node[anchor=east] at (0,0) {$(0,0)$};
\node[anchor=west] at (10,5) {$((k-1)n,n)$};
\node[anchor=west] at (1.5,-1) {Any path to these points must pass through $(2(k-1)+1,1)$.};
\node[anchor=west] at (1.5,-1.5) {Clearly the number of such paths are equinumerous and equal $s^{(k)}_{n-1}$, which};
\node[anchor=west] at (1.5,-2) {counts the number of paths from $((k-1)n,n) \to (k-1,1)$ that lie strictly};
\node[anchor=west] at (1.5,-2.5) {below the boundary line except at their end points.};
\node[anchor=west] at (-0.5,-3) {No path can get to these points without touching the boundary line en route};
\draw[->]        (0,-2.5) to (0,-0.1);
\draw[->]        (0,-2.5)  .. controls (0.5-0.25,-1.25) ..  (1,-0.1);
\draw[->]        (0,-2.5)  .. controls (0.5+0.5-0.25,-1.25) ..  (2,-0.1);
\draw[->]        (4.3,-0.8) to (3,-0.15);
\draw[->]        (4.3,-0.8) to (4,-0.15);
\draw[->]        (4.3,-0.8) to (5,-0.15);
\draw[->]        (10.5,-0.8)  .. controls (10,2) and (6,2) ..  (5.1,1.1);
\end{tikzpicture}
\caption{Paths counted by the numbers $d^{(k)}_{n,j}$: those paths from $((k-1)n,n)$ to $(j,0)$ that do not touch 
the boundary line or the $x$-axis en-route. This diagram relates to the proof of Lemma~\ref{expression:for:L}. 
Here $k=3$.
\label{andrews:diagram}} 
\end{figure}
In terms of $T^{(k)}_{n+1}(x)$ and $T^{(k)}_n(x)$ this is 
$$T^{(k)}_{n+1}(x)  = \dfrac{T^{(k)}_n(x)}{1-x} - \dfrac{x^{(k-1)n} s^{(k)}_n}{1-x}.$$
This recursion now allows us to give an expression for
\begin{align*}
T^{(k)}(x,y) := \sum_{n\geq 2} T^{(k)}_n(x) y^n 
&= T^{(k)}_2(x)y^2 + \sum_{n\geq 2} T^{(k)}_{n+1}(x) y^{n+1} \\
&= T^{(k)}_2(x)y^2 + \sum_{n\geq 2} y^{n+1} \left( \dfrac{T^{(k)}_n(x)}{1-x} - \dfrac{x^{(k-1)n} s^{(k)}_n}{1-x} \right) \\
&= T^{(k)}_2(x)y^2 + \dfrac{x^{k-1} y^2}{1-x} - \dfrac{yS^{(k)}(x^{k-1}y)}{1-x} + \dfrac{y}{1-x} T^{(k)}(x,y).
\end{align*}
Rearranging gives
\begin{align*}
T^{(k)}(x,y) \left(1-\dfrac{y}{1-x}\right) 
	&=  \dfrac{(1-x)T^{(k)}_2(x)y^2 + {x^{k-1} y^2} - yS^{(k)}(x^{k-1}y)}{1-x}\\
	&=  \dfrac{(1-x^{k-1})y^2 + {x^{k-1} y^2} - yS^{(k)}(x^{k-1}y)}{1-x},
\end{align*}
and so
$$T^{(k)}(x,y) =  \dfrac{y^2  - yS^{(k)}(x^{k-1}y)}{1-x-y}. $$
Now since $D^{(k)}_n(x) = x^{(k-1)n} T_n(1/x)$ we have
\begin{align}
D^{(k)}(x,y) &= \sum_{n\geq 2} D^{(k)}_n(x) y^n \nonumber \\
	&= \sum_{n\geq 2} x^{(k-1)n} T_n(1/x) y^n \nonumber \\
	& = T(1/x,x^{k-1}y) \nonumber \\
	& =  \dfrac{(x^{k-1}y)^2  - (x^{k-1}y) S^{(k)}((x^{-1})^{k-1} (x^{k-1}y))}{1-x^{-1}-x^{k-1}y} \nonumber \\
	& =  \dfrac{x^{k}y S^{(k)}(y)-x^{2k-1}y^2}{1+x^{k}y-x} .\label{Dformula}
\end{align}
This gives
\begin{align*}
\dfrac{\partial D^{(k)}(x,y)}{\partial x}
	=& \dfrac{ (1+x^ky-x) (kx^{k-1}yS^{(k)}(y) - (2k-1)x^{2k-2} y^2)  }{(1+x^ky-x)^2}\\ & -\dfrac{(kx^{k-1}y -1) (x^kyS^{(k)}(y)-x^{2k-1}y^2)}{(1+x^ky-x)^2} \\
	=& \dfrac{x^{k-1}(k-(k-1)x)yS^{(k)}(y) }{(1+x^ky-x)^2} \\ & - \dfrac{x^{2k-2}y^2((2k-1)-(2k-2)x+(k-1)x^ky)}{(1+x^ky-x)^2}.
\end{align*}
Using this expression it follows that 
\begin{align*}
 L(z) &= \frac{1}{pq} \dfrac{\partial D^{(k)}(x,y)}{\partial x}\Bigg|_{x\leftarrow 1/p \atop y \leftarrow p^{k-1}qz}
		= \frac{1}{pq} \dfrac{\partial D(x,p^{k-1}qz)}{\partial x} \bigg|_{x\leftarrow 1/p} \\
	&= \dfrac{(p^{-1})^{k-1}(k-(k-1)p^{-1})p^{k-1}qzS^{(k)}(p^{k-1}qz)}{pq(1+(p^{-1})^k(p^{k-1}qz)-p^{-1})^2} \\
	&  \qquad	- \dfrac{(p^{-1})^{2k-2}(p^{k-1}qz)^2((2k-1)-(2k-2)p^{-1}+(k-1)(p^{-1})^k(p^{k-1}qz))}{pq(1+(p^{-1})^k(p^{k-1}qz)-p^{-1})^2}\\
	&= \dfrac{z(kp-(k-1)) S^{(k)}(p^{k-1}qz) - qz^2((2k-1)p-(2k-2)+(k-1)(qz))}{q^2(1-z)^2}.\qedhere
\end{align*}
\end{proof}

Having worked through these intermediary lemmas, we are now ready to state the main theorem concerning the generating function for the sequence of expected residues.
\begin{theorem}\label{gf:one}
$M^{(k)}(z) = \dfrac{z}{(1-z)^2} \cdot \left( k - \dfrac{1}{q}\right)  + \dfrac{z}{1-z} \cdot \dfrac{p}{q} \cdot \dfrac{1}{\left(1-\tfrac{1}{q} S^{(k)}(p^{k-1}qz)\right)}$.
\end{theorem}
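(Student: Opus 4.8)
The plan is to convert the recursion of Lemma~\ref{lemma:one} into a functional equation for $M^{(k)}(z)$ and then substitute the closed form for $L^{(k)}(z)$ provided by Lemma~\ref{expression:for:L}. Write
$$g(z) := \tfrac{1}{q}S^{(k)}(p^{k-1}qz) = \sum_{i\geq 1}\tfrac{1}{q}s^{(k)}_i(p^{k-1}q)^i z^i,$$
so that the coefficient of $z^i$ in $g$ is precisely the weight $\tfrac1q s^{(k)}_i(p^{k-1}q)^i$ that appears in Lemma~\ref{lemma:one}. Since $M^{(k)}(z)$ has no constant term, for every $n\geq 2$ the convolution $\tfrac1q\sum_{0<i<n} s^{(k)}_i(p^{k-1}q)^i M^{(k)}_{n-i}(p)$ equals $[z^n]\bigl(g(z)M^{(k)}(z)\bigr)$, and this coefficient vanishes for $n\in\{0,1\}$.

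First I would multiply the identity of Lemma~\ref{lemma:one} by $z^n$ and sum over $n\geq 2$. Using $M^{(k)}_1(p)=k-1$ together with the definitions of $M^{(k)}(z)$ and $L^{(k)}(z)$, the left-hand side becomes $M^{(k)}(z)-(k-1)z$, the convolution term becomes $g(z)M^{(k)}(z)$, and the last term becomes $L^{(k)}(z)$. Hence $M^{(k)}(z)-(k-1)z = g(z)M^{(k)}(z) + L^{(k)}(z)$, which rearranges to
$$M^{(k)}(z) = \frac{(k-1)z + L^{(k)}(z)}{1-\tfrac1q S^{(k)}(p^{k-1}qz)}.$$

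It then remains to check that this coincides with the claimed expression. Writing $S:=S^{(k)}(p^{k-1}qz)$, so that $1-\tfrac1q S=(q-S)/q$, I would substitute the formula of Lemma~\ref{expression:for:L}, bring both sides over the common denominator $q(1-z)^2(q-S)$, and compare numerators. The key simplification is the identity $kp-(k-1)=1-kq=-(kq-1)$ (using $q=1-p$): it makes the coefficients of $S$ agree on the two sides, leaving a polynomial identity in $z$ with coefficients in $\mathbb{Q}[p,q]$. That identity collapses — after the $z^2$-terms cancel and one uses $q=1-p$ once more — to $(k-1)q^2-pqz=(k-1)q^2-pqz$, completing the proof.

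The only genuine work is this final comparison of numerators, and it is purely routine; the single thing to watch is careful bookkeeping of the three denominators $q^2$, $(1-z)^2$ and $(q-S)$, and applying $q=1-p$ at the right moments so that the dependence on $S$ and then on $z$ simplifies. I do not expect any step to pose a real obstacle once Lemmas~\ref{lemma:one} and~\ref{expression:for:L} are available.
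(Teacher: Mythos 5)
Your proposal is correct and follows essentially the same route as the paper: sum the recursion of Lemma~\ref{lemma:one} against $z^n$, use $M^{(k)}_1(p)=k-1$ to get $M^{(k)}(z)=\bigl((k-1)z+L^{(k)}(z)\bigr)/\bigl(1-\tfrac1q S^{(k)}(p^{k-1}qz)\bigr)$, then substitute Lemma~\ref{expression:for:L} and simplify. The final numerator comparison you describe (the identity $kp-(k-1)=1-kq$ and the cancellation collapsing to $(k-1)q^2-pqz$) is exactly the algebra carried out in the paper's proof, so nothing is missing.
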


\begin{proof}
Multiply both sides of the equation in Lemma~\ref{lemma:one} by $z^n$ and sum over all $n\geq 2$ to get
$$M^{(k)}(z)-M^{(k)}_1(p)z = \frac{1}{q}S^{(k)}(p^{k-1}qz)M^{(k)}(z)+ L^{(k)}(z).$$
Rearrange this, while also using the fact that $M^{(k)}_1(p)=k-1$, to get 
$$M^{(k)}(z) = \dfrac{(k-1)z+L^{(k)}(z)}{1-\tfrac{1}{q}S^{(k)}(p^{k-1}qz)}.$$
Lemma~\ref{expression:for:L} provides an expression for $L^{(k)}(z)$ and so the numerator above equals
\begin{align*}
(k-1)z+L^{(k)}(z) 
 =~ & (k-1)z +  \dfrac{z(kp-(k-1)) S^{(k)}(p^{k-1}qz)}{q^2(1-z)^2} \\  
		& - \dfrac{qz^2((2k-1)p-(2k-2)+(k-1)({p}qz))}{q^2(1-z)^2}\\
 =~ & \dfrac{z}{q^2(1-z)^2}\left((pk-(k-1))S_k(p^{k-1}qz) -pqz + (k-1)q^2\right).
\end{align*}
Substituting this back into the expression for $M^{(k)}(z)$ gives:
\begin{align*}
M^{(k)}(z) 
	&=   \dfrac{z\left((pk-(k-1))S_k(p^{k-1}qz) -pqz + (k-1)q^2\right)}{q^2(1-z)^2\left( 1-\frac{1}{q}S^{(k)}(p^{k-1}qz) \right)}\\
	&=  \dfrac{z(kq-1)}{q(1-z)^2} + \dfrac{p z}{q(1-z) \left(1-\tfrac{1}{q} S^{(k)}(p^{k-1}qz)\right)}.\qedhere
\end{align*}
\end{proof}

From Theorem~\ref{gf:one} we now deduce a succinct expression for the expected residue $M^{(k)}_n(p)$.
\begin{proposition} \label{twofive}
For all $n\geq 1$,
\begin{equation}
M^{(k)}_n(p)= 
 (k-1)n - \frac{p}{q}\left( (1-f^{(k)}_0)+(1-f^{(k)}_1)+\ldots + (1- f^{(k)}_{n-1}) \right),
\end{equation}
where $f^{(k)}_i$, defined in equation (\ref{f:defn}), is the probability of being in a diagonal state after $ki$ steps.
\end{proposition}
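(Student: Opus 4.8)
The plan is to extract the coefficient of $z^n$ from the closed form for $M^{(k)}(z)$ given in Theorem~\ref{gf:one}. Write $M^{(k)}(z) = A(z) + B(z)$ where
\[
A(z) = \frac{z(kq-1)}{q(1-z)^2}, \qquad
B(z) = \frac{pz}{q(1-z)}\cdot f^{(k)}(z),
\]
recalling from equation~(\ref{f:defn}) that $f^{(k)}(z) = 1/(1-\tfrac1q S^{(k)}(p^{k-1}qz))$. These two pieces will be treated separately and then recombined.

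First I would handle $A(z)$. Since $z/(1-z)^2 = \sum_{n\geq 1} n z^n$, we get $[z^n]A(z) = \frac{(kq-1)n}{q} = (k-1)n - \frac{p}{q}n$, using $kq-1 = (k-1)q - p$ so that $(kq-1)/q = (k-1) - p/q$. So the "main term" $(k-1)n$ already appears, and there is a leftover $-\frac{p}{q}n$ that must combine with the contribution of $B(z)$.

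Next I would handle $B(z)$. Write $\frac{z}{1-z} = \sum_{m\geq 1} z^m$, so that $\frac{z}{1-z}f^{(k)}(z)$ has $[z^n]$ equal to $\sum_{m=1}^{n} f^{(k)}_{n-m} = f^{(k)}_0 + f^{(k)}_1 + \cdots + f^{(k)}_{n-1}$ (reindexing the convolution). Hence $[z^n]B(z) = \frac{p}{q}\bigl(f^{(k)}_0 + f^{(k)}_1 + \cdots + f^{(k)}_{n-1}\bigr)$. Adding the two contributions,
\[
M^{(k)}_n(p) = (k-1)n - \frac{p}{q}n + \frac{p}{q}\sum_{i=0}^{n-1} f^{(k)}_i
= (k-1)n - \frac{p}{q}\sum_{i=0}^{n-1}\bigl(1 - f^{(k)}_i\bigr),
\]
which is exactly the claimed formula; the final rewriting uses that the sum $\sum_{i=0}^{n-1} 1 = n$ absorbs the $-\frac pq n$ term. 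Finally I would check the case $n=1$ directly as a sanity test: the sum has the single term $1 - f^{(k)}_0$, and since $f^{(k)}_0 = 1$ (the empty path is the unique way to be in a diagonal state after $0$ steps, and $S^{(k)}(0)=0$), the formula returns $M^{(k)}_1(p) = k-1$, consistent with Lemma~\ref{lemma:one}.

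I do not anticipate a serious obstacle here: the argument is a routine coefficient extraction from a rational-times-algebraic generating function, and the only mild subtlety is bookkeeping the index shifts in the convolution and confirming the algebraic identity $(kq-1)/q = (k-1) - p/q$ so that the linear-in-$n$ terms recombine into the $\sum(1-f^{(k)}_i)$ shape. The interpretive remark that $f^{(k)}_i$ is the probability of being in a diagonal state after $ki$ steps is already established in the discussion following equation~(\ref{f:defn}), so nothing further is needed for that clause of the statement.
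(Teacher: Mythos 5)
Your proposal is correct and follows essentially the same route as the paper: split the generating function from Theorem~\ref{gf:one} into the two summands, extract $[z^n]$ using $z/(1-z)^2=\sum n z^n$ and the partial-sum convolution $\tfrac{z}{1-z}f^{(k)}(z)$, and recombine via $(kq-1)/q=(k-1)-p/q$. The added $n=1$ sanity check is fine but not needed.
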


\begin{proof}
Extract the coefficient of $z^n$ in $M^{(k)}(z)$ from Theorem~\ref{gf:one} to find
\begin{align*}
M_n(p) &= [z^n]   \dfrac{kq-1}{q} \cdot \dfrac{z}{(1-z)^2} + [z^n] \dfrac{p}{q} \cdot \dfrac{z}{(1-z) \left(1-\tfrac{1}{q} S^{(k)}(p^{k-1}qz)\right)}\\
&=  \dfrac{(k-1)q-p}{q} \cdot [z^n]\dfrac{z}{(1-z)^2} +   \dfrac{p}{q} \cdot [z^n]\dfrac{z}{(1-z) \left(1-\tfrac{1}{q} S^{(k)}(p^{k-1}qz)\right)}\\
&=   (k-1)n - n\frac{p}{q} +   \dfrac{p}{q} \sum_{i=0}^{n-1} [z^i] \dfrac{1}{1-\tfrac{1}{q} S^{(k)}(p^{k-1}qz)}\\
&=   (k-1)n - \dfrac{p}{q}\left( n - \sum_{i=0}^{n-1} [z^i] \dfrac{1}{1-\tfrac{1}{q} S^{(k)}(p^{k-1}qz)}\right).
\end{align*}
From Subsection~\ref{sec:diagonal} we have 
\begin{align*}
M^{(k)}_n(p)	& = (k-1)n - \frac{p}{q}\left( n- (f^{(k)}_0+f^{(k)}_1+\ldots + f^{(k)}_{n-1}) \right)\\
		& = (k-1)n - \frac{p}{q}\left( (1-f^{(k)}_0)+(1-f^{(k)}_1)+\ldots + (1- f^{(k)}_{n-1}) \right),
\end{align*}
where $f^{(k)}_i$ is the probability that a path starting from the diagonal state $(n,n,\ldots,n)$ reaches the diagonal state $(n-i,n-i,\ldots,n-i)$.
Note that it may well have gone through diagonal states in between.
\end{proof}

\section{Asymptotic analysis of the expected residue}
Now we turn our attention to the asymptotic behaviour of $M^{(k)}_n(p)$ for fixed $p$ as $n \rightarrow \infty$. 

\begin{theorem}\label{asymptotics_theorem}
    Let $R^{*}=\dfrac{(k-1)^{k-1}}{k^k} \cdot \dfrac{1}{p^{k-1}q}$ and let $\epsilon>0$. Then
	\begin{align*}
        M^{(k)}_n(p)=
        \begin{cases}
            \left(k-\frac{1}{q}\right)n + \dfrac{p}{q} \cdot \dfrac{1}{1-\lambda_{k,p}} + \mathcal{O} \left(\left(\dfrac{1}{R^{*}}+\epsilon\right)^n\right) & \mbox{ if }q > \dfrac{1}{k} \\[1em]
			\sqrt{\dfrac{2k(k-1)n}{\pi}}
				+ 
					\mathcal{O} \left(\dfrac{1}{n^{1/2}}\right)
						  & \mbox{ if }q = \dfrac{1}{k} \\[1em]
            \dfrac{(k-1)(2-kq)}{2(1-kq)} + \mathcal{O} \left(\left(\dfrac{1}{R^{*}}+\epsilon\right)^n\right) & \mbox{ if }q < \dfrac{1}{k},
        \end{cases}
	\end{align*}
    where $\lambda_{k,p}:=\frac{1}{q}S^{(k)}(p^{k-1}q) \in (0,1)$ when $q>1/k$.
\end{theorem}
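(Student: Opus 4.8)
The plan is to start from the closed form for $M^{(k)}(z)$ in Theorem~\ref{gf:one} and apply singularity analysis, treating the three regimes $q>1/k$, $q=1/k$, $q<1/k$ separately according to the dominant singularity of the second summand. The first summand $\frac{z(kq-1)}{q(1-z)^2}$ contributes exactly $\frac{kq-1}{q}n = (k-\frac1q)n$ to the coefficient, and in the subcritical case $q<1/k$ this linear term is cancelled, so I would isolate the second summand $\frac{pz}{q(1-z)}f^{(k)}(z)$ and study its coefficient asymptotics, where $f^{(k)}(z)=1/(1-\frac1q S^{(k)}(p^{k-1}qz))$. The relevant facts are: (i) $S^{(k)}(w)$ has a unique dominant square-root-type singularity at $w=w^{*}:=\frac{(k-1)^{k-1}}{k^k}$ (from $S^{(k)}(w)=w/(1-S^{(k)}(w))^{k-1}$, the critical point of $y\mapsto y(1-y)^{k-1}$), with $S^{(k)}(w^{*})=1/k$; hence $S^{(k)}(p^{k-1}qz)$ has its dominant singularity at $z=R^{*}=\frac{(k-1)^{k-1}}{k^k p^{k-1}q}$; and (ii) $\frac1q S^{(k)}(p^{k-1}qw^{*}/\,p^{k-1}q)=\frac{1}{kq}$, so the denominator $1-\frac1q S^{(k)}(p^{k-1}qz)$ vanishes somewhere on $(0,R^{*})$ precisely when $\frac{1}{kq}>1$, i.e. $q<1/k$.

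For $q>1/k$: the denominator $1-\frac1q S^{(k)}(p^{k-1}qz)$ stays positive on $[0,R^{*}]$ (its value at $R^{*}$ is $1-\frac{1}{kq}>0$), so the only singularity of $f^{(k)}$ in $|z|\le R^{*}$ is the branch point at $R^{*}$, which is again square-root type. Then $\frac{pz}{q(1-z)}f^{(k)}(z)$ is analytic on a disc of radius larger than $1$ except for a simple pole at $z=1$ (since $R^{*}>1$ when $q>1/k$ — this needs a one-line check that $p^{k-1}q<w^{*}$ for all $p$, equivalently $k^k p^{k-1}q<(k-1)^{k-1}$, which follows because $p^{k-1}q\le w^{*}$ with equality only in a degenerate case and one has strict inequality for $q>1/k$) together with the branch point at $R^{*}$. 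The residue at $z=1$ gives $\frac{p}{q}\cdot\frac{1}{1-\lambda_{k,p}}$ with $\lambda_{k,p}=\frac1q S^{(k)}(p^{k-1}q)\in(0,1)$, and the contribution of the branch point at $R^{*}$ is $\mathcal{O}((1/R^{*}+\epsilon)^n)$ by the transfer theorem (the $\epsilon$ absorbs the polynomially-corrected $(R^{*})^{-n}$ term and any contribution from other singularities on $|z|=R^{*}$). Adding the $(k-\frac1q)n$ from the first summand yields the first line.

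For $q<1/k$: now $1-\frac1q S^{(k)}(p^{k-1}qz)$ has a simple zero at some $z=\rho\in(0,R^{*})$, which is the dominant singularity of $f^{(k)}$, and I expect $\rho<1$ as well (check: at $z=1$ the denominator equals $1-\lambda_{k,p}$, and one must verify this is negative when $q<1/k$, i.e. $S^{(k)}(p^{k-1}q)>q$; this should follow from monotonicity and the boundary value $S^{(k)}(p^{k-1}qR^{*}\text{-scaled})$). So $f^{(k)}$ has a simple pole at $\rho$; however $\frac{pz}{q(1-z)}f^{(k)}(z)$ — wait, the linear term must cancel, so more carefully: the coefficient of $z^n$ in $\frac{z(kq-1)}{q(1-z)^2}$ is $(k-\frac1q)n$ and in $\frac{pz}{q(1-z)}\sum f_i^{(k)}z^i$ is $\frac{p}{q}\sum_{i=0}^{n-1}f_i^{(k)}$; since $f_i^{(k)}\to$ a constant is false in the subcritical case (rather $f_i^{(k)}$ grows like $\rho^{-i}$), one instead uses Proposition~\ref{twofive}: $M_n^{(k)}(p)=(k-1)n-\frac pq\sum_{i=0}^{n-1}(1-f_i^{(k)})$. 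When $q<1/k$, $f^{(k)}(z)$ has a pole at $\rho<1$, so $\sum_{i\ge0}(1-f_i^{(k)})$ diverges; the tail is governed by the pole, and one computes $\lim_n\big[(k-1)n-\frac pq\sum_{i=0}^{n-1}(1-f_i^{(k)})\big]$ directly from the partial-fraction decomposition of $\frac{pz}{q(1-z)}f^{(k)}(z)$, extracting the constant term and showing the error is $\mathcal{O}((1/R^{*}+\epsilon)^n)$ from the remaining branch point at $R^{*}$; the stated value $\frac{(k-1)(2-kq)}{2(1-kq)}$ comes out of this residue computation using $S^{(k)}{}'$ at the relevant point and the functional equation.

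For $q=1/k$ (the critical case): the pole of $f^{(k)}$ collides with the branch point, $\rho=R^{*}=1/(kq\cdot p^{k-1}q/\,\dots)$ — precisely, the denominator $1-\frac1q S^{(k)}(p^{k-1}qz)$ vanishes exactly at $z=R^{*}$ where $S^{(k)}$ itself is singular, so near $z=R^{*}$ one has $1-\frac1q S^{(k)}(p^{k-1}qz)\sim C\sqrt{1-z/R^{*}}$ and hence $f^{(k)}(z)\sim C^{-1}(1-z/R^{*})^{-1/2}$. Since in this case $R^{*}=1$ (check: $q=1/k$ forces $p^{k-1}q=p^{k-1}/k$ and $R^{*}=\frac{(k-1)^{k-1}}{k^k}\cdot\frac{k}{p^{k-1}}=\frac{(k-1)^{k-1}}{k^{k-1}p^{k-1}}$; at $q=1/k$ we also have $p=(k-1)/k$, so $p^{k-1}=(k-1)^{k-1}/k^{k-1}$ and indeed $R^{*}=1$), the factor $\frac{1}{1-z}$ and the branch point sit at the same place, giving $\frac{pz}{q(1-z)}f^{(k)}(z)\sim \text{const}\cdot(1-z)^{-3/2}$, whose coefficients are $\sim \text{const}\cdot\sqrt{n}$ by the transfer theorem $[z^n](1-z)^{-3/2}\sim\sqrt{n/\pi}\cdot 2/\sqrt\pi$ — pinning down the constant $\sqrt{2k(k-1)/\pi}$ requires the local expansion $S^{(k)}(w)=\frac1k - c_1\sqrt{w^{*}-w}+\dots$ with $c_1$ obtained by differentiating the functional equation, plus bookkeeping of the $\frac pq$, $p^{k-1}q$, and $R^{*}=1$ factors; the $(k-\frac1q)n=0$ term vanishes identically at $q=1/k$, which is why only the $\sqrt n$ term survives.

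The main obstacle I expect is the critical case $q=1/k$: verifying that the pole and branch point coincide exactly (not just asymptotically), carrying out the local Puiseux expansion of $S^{(k)}$ at its singularity carefully enough to get the correct constant $\sqrt{2k(k-1)/\pi}$, and justifying the $\mathcal{O}(n^{-1/2})$ error term — this needs either a sufficiently precise singular expansion (to two orders) fed through the transfer theorem, or a direct saddle-point / Darboux argument. The subcritical constant $\frac{(k-1)(2-kq)}{2(1-kq)}$ is the second most delicate point, as it requires correctly combining the residue of the simple pole of $f^{(k)}$ at $\rho$ with the $\frac{1}{1-z}$ factor and the leading linear terms so that everything but a constant cancels; I would do this via explicit partial fractions of $\frac{pz}{q(1-z)}\cdot\frac{1}{1-\frac1q S^{(k)}(p^{k-1}qz)}$ near $z=\rho$, using $\big(\frac1q S^{(k)}(p^{k-1}qz)\big)'\big|_{z=\rho}$ evaluated through the functional equation $S^{(k)}=w/(1-S^{(k)})^{k-1}$.
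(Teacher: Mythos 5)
Your treatments of the cases $q>1/k$ and $q=1/k$ are essentially the paper's own argument (pole at $z=1$ plus a branch point at $R^*$ giving the exponentially small error; collision of the zero of the denominator with the square-root singularity of $S^{(k)}$ at $z=R^*=1$ giving a $(1-z)^{-3/2}$ singularity, with the constant extracted from the local Puiseux expansion of $S^{(k)}$ via its functional equation, as in Flajolet--Sedgewick VI.6). The genuine gap is in the subcritical case $q<1/k$, where your structural picture is wrong. You posit that $1-\tfrac{1}{q}S^{(k)}(p^{k-1}qz)$ has a zero at some $\rho\in(0,1)$ and that consequently $f^{(k)}(z)$ has a pole inside the unit disc with $f^{(k)}_i$ growing like $\rho^{-i}$. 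This cannot happen: the $f^{(k)}_i$ are probabilities, so $f^{(k)}_i\le 1$ and $f^{(k)}(z)$ has no singularity in $|z|<1$; if it did, Proposition~\ref{twofive} would force $M^{(k)}_n(p)$ to grow geometrically, contradicting $M^{(k)}_n(p)\le (k-1)n$. Your proposed verification that the denominator is negative at $z=1$, i.e.\ $S^{(k)}(p^{k-1}q)>q$, also fails: $\tfrac{1}{q}S^{(k)}(p^{k-1}q)$ is the probability of eventual return to a diagonal state, hence at most $1$, and for $q<1/k$ it equals $1$ exactly (the paper's Lemma~\ref{probeventualreturn}, proved via a CLT argument for the associated walk with drift $1-kq>0$). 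Moreover the functional equation shows $z=1$ is the \emph{only} possible zero of the denominator (Lemma~\ref{technical_lemma_one}: setting $S^{(k)}=q$ in $S^{(k)}=p^{k-1}qz/(1-S^{(k)})^{k-1}$ forces $z=1$).

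So the correct mechanism, which your outline misses, is that for $q<1/k$ the simple zero of the denominator sits exactly at $z=1$ and collides with the factor $1/(1-z)$, producing a double pole there; its leading coefficient is $\tfrac{1}{q}-k$ (a residue computation using $\left.\pdz S^{(k)}(p^{k-1}qz)\right|_{z\leftarrow 1}=pq/(1-kq)$), which precisely cancels the explicit $(k-\tfrac1q)n$ term from the first summand of Theorem~\ref{gf:one}, leaving only a constant. Extracting that constant requires the next order of the local expansion, i.e.\ the second derivative of $S^{(k)}(p^{k-1}qz)$ at $z=1$ (the paper's Lemma~\ref{lemma:S_k(z)_secondDerivative}, used through two applications of l'H\^opital); your plan to get $\frac{(k-1)(2-kq)}{2(1-kq)}$ ``using $S^{(k)}{}'$ at the relevant point'' is therefore not enough even once the pole is placed correctly. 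With the zero relocated to $z=1$ and the second-order information added, the rest of your residue/partial-fraction strategy goes through and coincides with the paper's proof.
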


The values of $\lambda_{k,p}$ for $k=2,3,4,$ and $5$ are shown in Figure~\ref{fig:lambdas}. 
We require several technical lemmas in order to prove Theorem~\ref{asymptotics_theorem}. 

\begin{figure}
\includegraphics[scale=0.8]{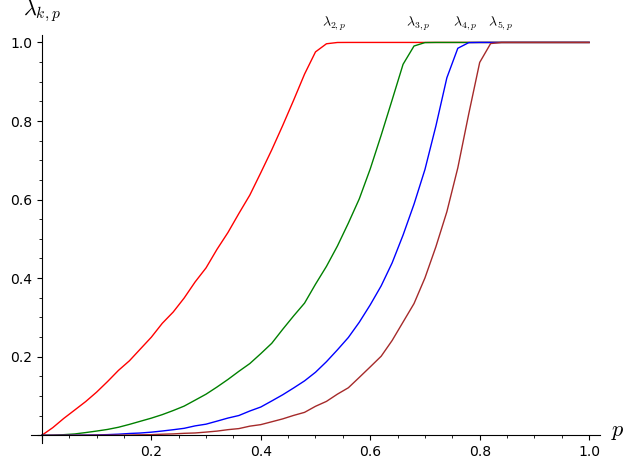}
\caption{
The value of $\lambda_{k,p}$ for $k \in \{2,3,4,5\}$. The lines for $\lambda_{2,p}$, $\lambda_{3,p}$, $\lambda_{4,p}$, and $\lambda_{5,p}$ are coloured 
red, green, blue, and brown, respectively.  
	\label{fig:lambdas}}
\end{figure}

\begin{lemma} \label{RadiusOfConvergence}
    $\frac{1}{q}S_k(p^{k-1}qz)$ converges for $|z|<\frac{(k-1)^{k-1}}{k^k}\frac{1}{p^{k-1}q}$.
\end{lemma}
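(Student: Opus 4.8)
The plan is to locate the dominant singularity of the power series $S^{(k)}(w)$ and then rescale. Recall from equation~(\ref{sgeneratingfunction}) that $w \mapsto S^{(k)}(w)$ is the solution, analytic near $0$ with $S^{(k)}(0)=0$, of the functional equation $S = w/(1-S)^{k-1}$, equivalently $w = S(1-S)^{k-1} =: \phi(S)$. Since $S^{(k)}$ has non-negative coefficients (it is a generating function for the Raney numbers $s^{(k)}_n \geq 0$), Pringsheim's theorem guarantees that its radius of convergence $\rho$ is itself a singularity, and by the standard smooth-implicit-function/singularity-analysis paradigm for such algebraic-type equations the singularity occurs where the inverse function $\phi$ ceases to be locally invertible, i.e. at the value $S = \tau$ where $\phi'(\tau) = 0$ with $\tau \in (0,1)$, and then $\rho = \phi(\tau)$.

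Concretely, I would first compute $\phi'(S) = (1-S)^{k-1} - (k-1)S(1-S)^{k-2} = (1-S)^{k-2}\bigl(1 - kS\bigr)$, whose only root in $(0,1)$ is $\tau = 1/k$. Substituting back gives the radius of convergence of $S^{(k)}$ itself as
\begin{align*}
\rho = \phi(1/k) = \frac{1}{k}\left(1 - \frac{1}{k}\right)^{k-1} = \frac{(k-1)^{k-1}}{k^k}.
\end{align*}
Since $S^{(k)}(z) = \sum_{n\geq 1} s^{(k)}_n z^n$ converges precisely for $|z| < \rho$, the composed series $S^{(k)}(p^{k-1}qz)$ converges precisely when $|p^{k-1}qz| < \rho$, that is for $|z| < \rho/(p^{k-1}q) = \frac{(k-1)^{k-1}}{k^k}\cdot\frac{1}{p^{k-1}q}$; multiplying by the constant $1/q$ does not change the radius of convergence. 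This is exactly the claimed bound. (Alternatively, and perhaps more elementarily given that an explicit formula $s^{(k)}_n = \Raney_{k,k-1}(n-1)$ is available, one can apply the ratio test or Stirling's formula directly to $s^{(k)}_n = \frac{k-1}{kn-1}\binom{kn-1}{n-1}$ to see that $s_n^{(k)1/n} \to k^k/(k-1)^{k-1}$, giving the same radius; I would mention this as a cross-check.)

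The main obstacle is not any single computation but rather making rigorous the claim that $\rho$ equals $\phi(\tau)$ and nothing smaller — one must verify that $\phi$ is strictly increasing on $[0,1/k]$ (immediate from the sign of $\phi'$ above, since $1-kS>0$ there), so that the analytic branch $S^{(k)}$ defined near $0$ extends continuously up to $z = \rho$ with $S^{(k)}(\rho^-) = \tau = 1/k$, and that it genuinely fails to be analytic there (the usual square-root branch point, since $\phi''(\tau) \neq 0$: indeed $\phi''(S) = (1-S)^{k-3}\bigl(-(k-1)(1-kS) - k(1-S)\bigr)$, which at $S=1/k$ equals $-(1-1/k)^{k-3}\cdot\frac{k-1}{k}\cdot\frac{k-1}{k}\cdot\tfrac{k}{k-1}\ne 0$ for $k\ge 2$ — I would double-check this arithmetic when writing it out, but nonvanishing is the only thing needed). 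Given that the statement of the lemma only asserts the radius of convergence and not the nature of the singularity, the cleanest write-up simply invokes Pringsheim plus the monotonicity of $\phi$ to pin down $\rho$, leaving the finer singularity analysis (which is presumably needed for Theorem~\ref{asymptotics_theorem} anyway) to a later lemma.
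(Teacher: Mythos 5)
Your argument is correct, but it takes a genuinely different route from the paper. The paper's proof is entirely elementary: it applies the ratio test directly to the terms $\frac{1}{q}s^{(k)}_n(p^{k-1}q|z|)^n$ using the explicit formula $s^{(k)}_n=\frac{k-1}{kn-1}\binom{kn-1}{n-1}$, and the limiting ratio $\frac{k^k}{(k-1)^{k-1}}p^{k-1}q|z|$ immediately gives the stated disc of convergence (and divergence outside it). You instead extract the radius of $S^{(k)}$ from the functional equation $w=\phi(S)=S(1-S)^{k-1}$, locating the critical point $\tau=1/k$ of $\phi$ and setting $\rho=\phi(\tau)=\frac{(k-1)^{k-1}}{k^k}$, with Pringsheim plus the smooth implicit-function/inversion schema doing the work of identifying $\rho$ as the dominant singularity; rescaling by $p^{k-1}q$ and the harmless factor $1/q$ then gives the lemma. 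This is exactly the machinery the paper itself invokes later (Flajolet--Sedgewick, Theorem VI.6, in the $q=1/k$ case of Theorem~\ref{asymptotics_theorem}), so your route is legitimate and buys more: it identifies the nature of the singularity, which the asymptotic analysis eventually needs, whereas the ratio test buys brevity and self-containedness, needing nothing beyond the closed form of the Raney numbers. Two small caveats: your $\phi''$ has a slip (the correct factor is $k-2$, not $k-1$, in the first term, i.e. $\phi''(S)=(1-S)^{k-3}\bigl(-(k-2)(1-kS)-k(1-S)\bigr)$, giving $\phi''(1/k)=-(k-1)(1-\tfrac{1}{k})^{k-3}\neq 0$), though since the offending term vanishes at $S=1/k$ your nonvanishing conclusion stands and is in any case not needed for the lemma; and the phrase ``converges precisely for $|z|<\rho$'' overstates matters, since the lemma (and the paper) only require the radius of convergence, not behaviour on the boundary circle, where this series in fact converges as well.
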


\begin{proof}
Let $(\alpha_n)_{n \geq 1}$ be the sequence generated by $\frac{1}{q}S^{(k)}(p^{k-1}qz)$ as a power series in $z$, so that 
$$\alpha_n=\frac{1}{q}\cdot\frac{k-1}{kn-1}\binom{kn-1}{n-1}(p^{k-1}qz)^n.$$ 
Apply the ratio test to $(\alpha_n)_{n \geq 1}$ to find 
\begin{align*}
    \dfrac{|\alpha_{n+1}|}{|\alpha_n|} 
	& = \dfrac{\frac{1}{q}\frac{k-1}{k(n+1)-1}\binom{k(n+1)-1}{n}(p^{k-1}q|z|)^{n+1}}{\frac{1}{q}\frac{k-1}{kn-1}\binom{kn-1}{n-1}(p^{k-1}q|z|)^{n}}\\
    & =\left(\frac{kn-1}{kn+k-1}\right) p^{k-1}q|z| \cdot \frac{(kn+k-1)!}{((k-1)n+k-1)!n!} \cdot \frac{((k-1)n)!(n-1)!}{(kn-1)!}\\
    & =\left(\frac{kn-1}{kn+k-1}\right)p^{k-1}q|z|\cdot \frac{1}{n} \cdot \frac{(kn+k-1)(kn+k-2)\cdots(kn)}{((k-1)n+k-1)((k-1)n+k-2)\cdots((k-1)n+1)}\\
    & \rightarrow \quad \dfrac{k^k}{(k-1)^{k-1}}p^{k-1}q|z| \mbox{ as }n \rightarrow \infty.
\end{align*}
Therefore $\frac{1}{q}S^{(k)}(p^{k-1}qz)$ converges for $|z|<\frac{(k-1)^{k-1}}{k^k}\frac{1}{p^{k-1}q}$ and diverges for $|z|>\frac{(k-1)^{k-1}}{k^k}\frac{1}{p^{k-1}q}$.
\end{proof}
Notice that, on the interval $p \in (0,1)$, the function $p\mapsto p^{k-1}q$ has a maximum value of $\frac{(k-1)^{k-1}}{k^k}$ that occurs uniquely at $p=\frac{k-1}{k}$. 
Therefore $\frac{1}{q}S^{(k)}(p^{k-1}qz)$ is holomorphic on a disc centered at the origin of radius $R^{*}:=\frac{(k-1)^{k-1}}{k^k}\frac{1}{p^{k-1}q}$
and, moreover, $R^{*}>1$ for $q \neq \frac{1}{k}$.

\begin{lemma} \label{technical_lemma_one}
If $\frac{1}{q}S^{(k)}(p^{k-1}qz)=1$, then $z=1$.
\end{lemma}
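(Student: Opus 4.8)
The plan is to reduce the equation to the defining functional relation for $S^{(k)}$, namely equation~(\ref{sgeneratingfunction}), which states $S^{(k)}(w) = w/(1 - S^{(k)}(w))^{k-1}$, or equivalently $S^{(k)}(w)\,(1 - S^{(k)}(w))^{k-1} = w$. First I would set $w := p^{k-1}qz$ and $u := S^{(k)}(w)$, so that the hypothesis $\tfrac1q S^{(k)}(p^{k-1}qz) = 1$ becomes simply $u = q$.

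Next I would substitute into the functional equation: $u(1-u)^{k-1} = w = p^{k-1}qz$. Putting $u = q$ and using $1 - q = p$ gives $q\,p^{k-1} = p^{k-1}qz$; since $p,q \in (0,1)$, the factor $p^{k-1}q$ is nonzero and may be cancelled, yielding $z = 1$. That is the whole argument.

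The one point that needs care — and the only plausible obstacle — is justifying that the functional equation~(\ref{sgeneratingfunction}) may legitimately be invoked at the relevant $w$. As an identity of power series it holds on the open disc $|w| < \rho$, where $\rho := (k-1)^{k-1}/k^k$ is the radius of convergence of $S^{(k)}$ (cf.\ Lemma~\ref{RadiusOfConvergence}); and since the coefficient of $z^n$ in $\tfrac1q S^{(k)}(p^{k-1}qz)$ is, by the Raney-number asymptotics, of order $n^{-3/2}$, that series in fact converges on the closed disc $|z| \le R^{*}$, i.e.\ for $|w| \le \rho$, so any $z$ satisfying the hypothesis lies there. On the boundary circle $|w| = \rho$ the identity extends by continuity, because $|S^{(k)}(w)| \le S^{(k)}(\rho) = 1/k < 1$ and hence $1 - S^{(k)}(w)$ never vanishes. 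Note that $p^{k-1}q \le \rho$ with equality precisely when $p = (k-1)/k$, so $z=1$ always lies in the admissible range, consistent with the conclusion. If one prefers to sidestep the boundary bookkeeping altogether, one may read $S^{(k)}$ as the algebraic function determined by~(\ref{sgeneratingfunction}): every branch value $u$ at a point $w$ still satisfies $u(1-u)^{k-1} = w$, so the cancellation argument goes through verbatim. Beyond this verification the proof is a two-line computation, so there is no substantive difficulty.
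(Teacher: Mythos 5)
Your argument is correct and is essentially the paper's own proof: substitute $S^{(k)}(p^{k-1}qz)=q$ into the functional equation (\ref{sgeneratingfunction}) and cancel the nonzero factor $p^{k-1}q$ to get $z=1$. The extra discussion of where the functional equation is valid is additional (reasonable) bookkeeping that the paper omits, not a different method.
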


\begin{proof}
By equation (\ref{sgeneratingfunction}), $S^{(k)}(p^{k-1}qz) = \frac{p^{k-1}qz}{(1-S^{(k)}(p^{k-1}qz))^{k-1}}$. Replacing $S^{(k)}(p^{k-1}qz)$ with $q$, we find:

$$q=\dfrac{p^{k-1}qz}{(1-q)^{k-1}}.$$
Rearranging gives $z=1$.
\end{proof}

\begin{lemma}\label{probeventualreturn}
We have
$$\lambda_{k,p} :=\frac{1}{q} S^{(k)}(p^{k-1}q)
	\begin{cases}
	=1 & \mbox{ if } q<\tfrac{1}{k} \\
	\in (0,1)  & \mbox{ if } q>\tfrac{1}{k}.
	\end{cases}
$$
\end{lemma}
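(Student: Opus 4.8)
The plan is to reduce everything to the behaviour of the map $\phi(S):=S(1-S)^{k-1}$, using the functional equation (\ref{sgeneratingfunction}) in the form $\phi\bigl(S^{(k)}(w)\bigr)=w$ together with the positivity of the coefficients of $S^{(k)}$. First I would record that, since $\phi'(S)=(1-S)^{k-2}(1-kS)$, the map $\phi$ is strictly increasing on $[0,1/k]$ with $\phi(0)=0$ and $\phi(1/k)=\tfrac1k(1-\tfrac1k)^{k-1}=R$, where $R:=\tfrac{(k-1)^{k-1}}{k^k}$; so $\phi$ restricts to a strictly increasing bijection $[0,1/k)\to[0,R)$. Recall that $R$ is the radius of convergence of $S^{(k)}$ (by the computation in the proof of Lemma~\ref{RadiusOfConvergence}), so $S^{(k)}$ is continuous, nonnegative and nondecreasing on $[0,R)$ with $S^{(k)}(0)=0$. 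I would then show $S^{(k)}(w)<1/k$ for all $w\in[0,R)$: if $S^{(k)}(w_1)=1/k$ then $w_1=\phi(1/k)=R$, impossible, so $S^{(k)}$ never meets $1/k$ on $[0,R)$ and by the intermediate value theorem stays below it. Hence for $w\in[0,R)$, $S^{(k)}(w)$ is \emph{the} element of $[0,1/k)$ sent to $w$ by $\phi$.

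Now I would substitute $w=p^{k-1}q$. The key identity is $p^{k-1}q=q(1-q)^{k-1}=\phi(q)$, and by the remark after Lemma~\ref{RadiusOfConvergence} one has $p^{k-1}q\le R$ with equality only at $p=\tfrac{k-1}{k}$, i.e.\ only when $q=\tfrac1k$; so for $q\ne\tfrac1k$, $p^{k-1}q<R$ and $S^{(k)}(p^{k-1}q)$ is defined. If $q<\tfrac1k$ then $q$ lies in $[0,1/k)$ and $\phi(q)=p^{k-1}q$, so by uniqueness $S^{(k)}(p^{k-1}q)=q$ and $\lambda_{k,p}=\tfrac1q S^{(k)}(p^{k-1}q)=1$. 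If $q>\tfrac1k$ then the branch description forces $0\le S^{(k)}(p^{k-1}q)<\tfrac1k<q$, while $S^{(k)}(p^{k-1}q)\ge s^{(k)}_1 p^{k-1}q=p^{k-1}q>0$ because $s^{(k)}_1=\Raney_{k,k-1}(0)=1$; hence $0<S^{(k)}(p^{k-1}q)<q$ and $\lambda_{k,p}\in(0,1)$.

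The one step I expect to be delicate is the branch identification — showing the power-series solution $S^{(k)}$ does not cross from the increasing branch of $\phi$ (values in $[0,1/k)$) to the decreasing branch (values in $(1/k,1)$); this is precisely where one uses that $R$ is simultaneously the radius of convergence of $S^{(k)}$ and the maximum of $\phi$ on $[0,1]$, through the intermediate value theorem. For the case $q>\tfrac1k$ one can alternatively route through Lemma~\ref{technical_lemma_one}: the function $z\mapsto\tfrac1q S^{(k)}(p^{k-1}qz)$ is continuous and strictly increasing on $[0,R^{*})\ni1$ with value $0$ at $z=0$, so by that lemma it can equal $1$ only at $z=1$; but on $[0,R^{*})$ it is bounded above by $\tfrac1{kq}<1$ (since $S^{(k)}(w)<\tfrac1k$ on $[0,R)$ and $kq>1$), so $\lambda_{k,p}<1$.
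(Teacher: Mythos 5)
Your proof is correct, but it takes a genuinely different route from the paper's. You work entirely analytically with the functional equation (\ref{sgeneratingfunction}): writing $\phi(S)=S(1-S)^{k-1}$, you use $\phi'(S)=(1-S)^{k-2}(1-kS)$ to see that $\phi$ is a strictly increasing bijection $[0,1/k)\to[0,R)$ with $R=(k-1)^{k-1}/k^k$, then pin down the branch by combining the radius of convergence from Lemma~\ref{RadiusOfConvergence} with the intermediate value theorem to get $S^{(k)}(w)<1/k$ on $[0,R)$; the two cases then fall out from $\phi(q)=p^{k-1}q$ (giving $S^{(k)}(p^{k-1}q)=q$ when $q<1/k$) and from $0<S^{(k)}(p^{k-1}q)<1/k<q$ (when $q>1/k$). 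The paper instead argues probabilistically: it interprets $\tfrac1q S^{(k)}(p^{k-1}q)$ as the probability of ever returning to a diagonal state, models this by a walk with steps $+1$ (prob.\ $p$) and $-(k-1)$ (prob.\ $q$), and uses the central limit theorem together with the unit upward steps to get return with probability $1$ when the drift $1-kq$ is positive; for $q>1/k$ it uses a conjugate-parameter trick, choosing $q_0<1/k$ with $p_0^{k-1}q_0=p^{k-1}q$ and concluding $\lambda_{k,p}=q_0/q<1$. Note that this conjugacy is exactly the paper's way of resolving the branch identification you flag as delicate, and it yields the sharper exact value $\lambda_{k,p}=q_0/q$, whereas your argument gives only the bound $\lambda_{k,p}<1/(kq)$ (which suffices for the statement). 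What your approach buys is self-containedness and elementarity: no appeal to the probabilistic interpretation of the generating function or to the CLT, just the algebraic equation, positivity of coefficients, and continuity; your closing alternative via Lemma~\ref{technical_lemma_one} is not even needed once you have $S^{(k)}(w)<1/k$ on $[0,R)$, since that bound alone gives $\lambda_{k,p}<1/(kq)<1$ for $q>1/k$.
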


\begin{proof}
Recall that the function $\frac{1}{q}S^{(k)}(p^{k-1}qz)$ is the probability generating function for the order of the first return to a diagonal state.
So $\frac{1}{q}S^{(k)}(p^{k-1}q)$ is the probability that an infinite walk will ever return to the boundary line. 
The initial step in our paths process is a south step and any return to the diagonal requires $k-1$ as many west steps as south steps. 
We will consider a related random walk in order to determine whether or not this will occur with probability $1$.

Consider a random walk $S_n=S_0+(X_1+\ldots +X_n)$ on $\mathbb{Z}$ that starts at $S_0=-(k-1)$ and takes steps $X_i \in \{-(k-1),1\}$ with 
\begin{align*}
\P(X_i=+1) &= p \\
\P(X_i=-(k-1)) &= q.
\end{align*}
The event $S_i=0$ for this random walk is equivalent to the event of a diagonal return in our paths process.

We are interested in the probability that $S_i=0$ for some $i>0$. As the $X_i$ are independent, 
the expectation 
\begin{align*}
\E(S_i) = & S_0 + \sum_{j=1}^i \E(X_j)\\
		 = & - (k-1) + i\E(X_1)  \\
		= & - (k-1) + i(1-qk),
\end{align*}
since $\E(X_1) = 1-qk$. 
Notice that $\E(X_1)>0$ for $q<1/k$ and $\E(X_1)<0$ for $q>1/k$. 

Suppose that $q<1/k$, and let $1-qk=\mu~$($\mu>0$). 
Let $\mathrm{var}(X_1)=\sigma^2$ where $0<\sigma<\infty$. 
Define $S_n':=X_1+X_2+\ldots+X_n$. 
We are interested in $\P(S_n<0)$ which is equal to $\P(S_n'<k-1)$.
The event $S_n'<(k-1)$ is equivalent to $S_n'-n\mu<(k-1)-n\mu$, which in turn is equivalent to 
$$\dfrac{S_n'-n\mu}{\sqrt{n}\sigma}<\dfrac{(k-1)}{\sqrt{n}\sigma}-\dfrac{\mu}{\sigma}\sqrt{n}.$$
By the central limit theorem (see e.g Grimmett and Stirzaker \cite[\S 5.10]{gsbook}), 
we have $$\dfrac{S_n'-n\mu}{\sqrt{n}\sigma} \xrightarrow[\text{}]{\text{~~D~~}} N(0,1) \mbox{ as }n \to \infty.$$ 
Therefore, for $n$ sufficiently large, 
$$\mathbb{P}(S_n'<k-1)\to \mathbb{P}\left(Y<\dfrac{(k-1)}{\sqrt{n}\sigma}-\dfrac{\mu}{\sigma}\sqrt{n}\right)\mbox{, where }Y \sim N(0,1).$$
But $\dfrac{(k-1)}{\sqrt{n}\sigma}-\dfrac{\mu}{\sigma}\sqrt{n} \to -\infty$ as $n \to \infty$, so $\mathbb{P}(S_n'<(k-1))\to 0$. 
This implies $\mathbb{P}(S_n\geq0)\to1$ as $n\to\infty$. 
As the random walk moves in the positive direction by only taking unit steps, the event $S_n \geq 0$ guarantees that $S_i=0$ for some $i\leq n$.
Therefore $$\ds\sum_{i \geq 0} s^{(k)}_i p^{(k-1)i}q^{i-1} = \frac{1}{q} S^{(k)}(p^{k-1}q)=1.$$ 

Now suppose that $q>1/k$.
Notice that on the closed interval $[0,1]$ the continuous function $f(q):=p^{k-1}q$ takes the minimum value 0 at $q=0$ and $q=1$.
It takes a local maximum uniquely at $q=1/k$. 
Therefore, by the intermediate value theorem, for every $q_1>1/k$ there exists a $q_0<1/k$ such that $$p_0^{k-1}q_0 = p_1^{k-1}q_1,$$
where $p_0:=1-q_0$ and $p_1:=1-q_1$. 
We have already seen that $\frac{1}{q_0} S^{(k)}(p_0^{k-1}q_0) = 1$ for any $q_0<1/k$.
Therefore, for $q_1>\frac{1}{k}$, we have 
\begin{align*}
\frac{1}{q_1} S^{(k)}(p_1^{k-1} q_1) 
&= \frac{q_0}{q_1} \frac{1}{q_0} S^{(k)}(p_1^{k-1} q_1) \\
&= \frac{q_0}{q_1} \frac{1}{q_0} S^{(k)}(p_0^{k-1} q_0) \\
&= \frac{q_0}{q_1} \cdot 1  <1,
\end{align*}
since $q_0<q_1$.
\end{proof}

A more thorough treatment of this random walk can be found in Sen and Saran~\cite{sensaran}.

\begin{lemma}\label{lemma:S_k(z)_firstDerivative}
The derivative
$$\left. \pdz S^{(k)} (p^{k-1}qz)\right|_{z\leftarrow 1}= 
	\begin{cases}
	\dfrac{pq}{1-kq}  & \mbox{ if } q<\tfrac{1}{k} \\[1em]
	\dfrac{(1-q\lambda_{k,p})p^{k-1}q}{(1-q\lambda_{k,p})^{k} - (k-1)p^{k-1}q} & \mbox{ if } q>\tfrac{1}{k} .
	\end{cases}
$$
\end{lemma}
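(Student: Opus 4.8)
The plan is to differentiate the defining functional equation (\ref{sgeneratingfunction}) implicitly and evaluate at $z=1$. First I would set $G(z) := S^{(k)}(p^{k-1}qz)$ and record that, by Lemma~\ref{RadiusOfConvergence} and the remark following it, $G$ is holomorphic on a disc of radius $R^{*}>1$ whenever $q\neq\frac1k$; this legitimises term-by-term differentiation and substitution of $z=1$ in both cases of the statement, so $G'(1)$ is a well-defined finite number obtainable by implicit differentiation. From (\ref{sgeneratingfunction}) we have $G(z)\,(1-G(z))^{k-1}=p^{k-1}qz$; differentiating in $z$, collecting the $G'$ terms, and dividing out the common factor $(1-G(z))^{k-2}$ gives $G'(z)=\dfrac{p^{k-1}q}{(1-G(z))^{k-2}\,(1-kG(z))}$.

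Next I would substitute $z=1$, using $G(1)=S^{(k)}(p^{k-1}q)=q\lambda_{k,p}$ by the definition of $\lambda_{k,p}$. In the subcritical case $q<\frac1k$, Lemma~\ref{probeventualreturn} gives $\lambda_{k,p}=1$, hence $G(1)=q$, and since $1-q=p$ the formula collapses to $G'(1)=\dfrac{p^{k-1}q}{p^{k-2}(1-kq)}=\dfrac{pq}{1-kq}$, which is the first branch.

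For $q>\frac1k$ the remaining task is purely algebraic: rewrite the denominator $(1-q\lambda_{k,p})^{k-2}(1-kq\lambda_{k,p})$ in the stated form. Multiplying numerator and denominator by $(1-q\lambda_{k,p})$ turns the denominator into $(1-q\lambda_{k,p})^{k-1}(1-kq\lambda_{k,p})$, and then using the identity $\lambda_{k,p}(1-q\lambda_{k,p})^{k-1}=p^{k-1}$ — obtained by evaluating (\ref{sgeneratingfunction}) at $z=1$ — to substitute $kq\lambda_{k,p}(1-q\lambda_{k,p})^{k-1}=kqp^{k-1}$ and $q\lambda_{k,p}(1-q\lambda_{k,p})^{k-1}=qp^{k-1}$ converts it into $(1-q\lambda_{k,p})^{k-1}-kqp^{k-1}=(1-q\lambda_{k,p})^{k}-(k-1)p^{k-1}q$, yielding the second branch.

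I do not expect a genuine obstacle here. The only points that need a little care are the holomorphy justification permitting evaluation at $z=1$ (which rests entirely on $R^{*}>1$ from the preceding discussion) and the bookkeeping in the final rearrangement; in particular $1-kq\lambda_{k,p}\neq 0$ holds automatically, since otherwise $G'(1)$ would fail to be finite.
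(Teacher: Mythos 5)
Your proposal is correct and follows essentially the same route as the paper: implicit differentiation of the functional equation (\ref{sgeneratingfunction}), justified by the holomorphy from Lemma~\ref{RadiusOfConvergence}, followed by evaluation at $z=1$ using the two cases of Lemma~\ref{probeventualreturn}. Your intermediate form $G'(z)=p^{k-1}q/\bigl((1-G)^{k-2}(1-kG)\bigr)$ differs from the paper's only by using the functional equation to eliminate the explicit $z$ in the denominator, and your final algebraic conversion to the stated form (via $\lambda_{k,p}(1-q\lambda_{k,p})^{k-1}=p^{k-1}$) checks out.
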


\begin{proof}
By Lemma~\ref{RadiusOfConvergence}, $S^{(k)}(p^{k-1}qz)$ is holomorphic in a region around $z=1$, for $q \neq \dfrac{1}{k}$, 
and so we can evaluate $\left.\pdz S^{(k)} (p^{k-1}qz)\right|_{z\leftarrow 1}$. 
Starting with $S^{(k)}(p^{k-1}qz)=\dfrac{p^{k-1}qz}{(1-S^{(k)}(p^{k-1}qz))^{k-1}}$, we have:
\begin{align*}
    &\pdz S^{(k)} (p^{k-1}qz) \\ & =p^{k-1}qz(1-k)(1-S^{(k)}(p^{k-1}qz))^{-k}\left(-\pdz S^{(k)}(p^{k-1}qz)\right)+p^{k-1}q(1-S^{(k)}(p^{k-1}qz))^{1-k}.
\end{align*}
By rearranging, one now has an expression for the derivative of $S^{(k)}(p^{k-1}qz)$:
\begin{align}
\pdz S^{(k)}(p^{k-1}qz) & = \dfrac{p^{k-1}q(1-S^{(k)}(p^{k-1}qz))}{(1-S^{(k)}(p^{k-1}qz))^{k}-(k-1)p^{k-1}qz}. \label{eq:S_k(z)_firstDerivative}
\end{align}
We wish to evaluate this at $z=1$. Lemma~\ref{probeventualreturn} gives us two cases: 
$$S^{(k)}(p^{k-1}q)= 
	\begin{cases}
	q & \mbox{ if } q<\frac{1}{k} \\
	q\lambda_{k,p} & \mbox{ if } q>\frac{1}{k}.
	\end{cases}
$$
The result follows.
\end{proof}

\begin{lemma}\label{lemma:S_k(z)_secondDerivative}
For $q < \dfrac{1}{k}$,  
$$ \ppdz\left. S^{(k)}(p^{k-1}qz)\right|_{z\leftarrow 1}=\dfrac{pq^2(k-1)(2-qk)}{(1-qk)^3}.$$
\end{lemma}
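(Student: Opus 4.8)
The plan is to differentiate the implicit relation for $S^{(k)}(p^{k-1}qz)$ a second time and evaluate at $z=1$, using Lemma~\ref{lemma:S_k(z)_firstDerivative} and Lemma~\ref{probeventualreturn} to supply the values of $S^{(k)}(p^{k-1}q)$ and its first derivative at that point. Throughout, write $s:=S^{(k)}(p^{k-1}qz)$ and $s':=\pdz S^{(k)}(p^{k-1}qz)$, $s'':=\ppdz S^{(k)}(p^{k-1}qz)$ for brevity. Since $q<\tfrac1k$, Lemma~\ref{RadiusOfConvergence} guarantees holomorphy of $s$ in a neighbourhood of $z=1$ (as $R^{*}>1$ there), so these derivatives exist and the formal manipulations are legitimate.

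First I would take the already-derived first-derivative identity, equation~(\ref{eq:S_k(z)_firstDerivative}), in the cleared form
$$s'\bigl((1-s)^{k}-(k-1)p^{k-1}qz\bigr) = p^{k-1}q(1-s),$$
and differentiate both sides in $z$ again. The left side produces $s''\bigl((1-s)^{k}-(k-1)p^{k-1}qz\bigr) + s'\bigl(-k(1-s)^{k-1}s' - (k-1)p^{k-1}q\bigr)$, and the right side produces $-p^{k-1}q\,s'$. Solving for $s''$ gives
$$s'' = \frac{k(1-s)^{k-1}(s')^{2} + (k-1)p^{k-1}q\,s' - p^{k-1}q\,s'}{(1-s)^{k}-(k-1)p^{k-1}qz} = \frac{k(1-s)^{k-1}(s')^{2} + (k-2)p^{k-1}q\,s'}{(1-s)^{k}-(k-1)p^{k-1}qz}.$$

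Next I would substitute $z=1$. By Lemma~\ref{probeventualreturn}, in the regime $q<\tfrac1k$ we have $s\big|_{z=1} = S^{(k)}(p^{k-1}q) = q$, hence $1-s = p$ there; and by Lemma~\ref{lemma:S_k(z)_firstDerivative}, $s'\big|_{z=1} = \tfrac{pq}{1-kq}$. The denominator becomes $p^{k} - (k-1)p^{k-1}q = p^{k-1}(p-(k-1)q) = p^{k-1}(1-kq)$. Plugging in, the numerator is $k p^{k-1}\bigl(\tfrac{pq}{1-kq}\bigr)^{2} + (k-2)p^{k-1}q\cdot\tfrac{pq}{1-kq} = p^{k-1}\cdot\tfrac{pq}{1-kq}\Bigl(\tfrac{kpq}{1-kq} + (k-2)q\Bigr)$, and the bracket simplifies to $\tfrac{q(kp + (k-2)(1-kq))}{1-kq} = \tfrac{q(k-2 + 2p - k^2 q + 2kq)}{1-kq}$; using $p=1-q$ one checks $kp + (k-2)(1-kq) = k-2 + 2q\cdot\frac{?}{}$—in any case after collecting terms this equals $\tfrac{q(k-1)(2-qk)}{1-kq}$ (the factorization $k-2+2p-k^2q+2kq = (k-1)(2-qk)$ with $p=1-q$ is the one routine algebra step to verify). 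Dividing the numerator by $p^{k-1}(1-kq)$ then yields
$$s''\big|_{z=1} = \frac{pq}{1-kq}\cdot\frac{q(k-1)(2-qk)}{(1-kq)^2} = \frac{pq^{2}(k-1)(2-qk)}{(1-qk)^{3}},$$
which is the claimed formula.

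The only real obstacle is bookkeeping: keeping the chain-rule terms straight when differentiating $(1-s)^{k}$ and $(k-1)p^{k-1}qz$, and then carrying out the substitution $s=q$, $s'=\tfrac{pq}{1-kq}$ cleanly so that the common factors $p^{k-1}$ and powers of $(1-kq)$ cancel correctly. There is no conceptual difficulty beyond the implicit differentiation already performed in the proof of Lemma~\ref{lemma:S_k(z)_firstDerivative}; the one place to be careful is the algebraic identity $kp + (k-2)(1-kq) = (k-1)(2-qk)$ under $p=1-q$, which is what makes the final expression factor so pleasantly.
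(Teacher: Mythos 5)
Your proposal is correct and follows essentially the same route as the paper: differentiate the first-derivative identity implicitly (you use the cleared form $s'\bigl((1-s)^k-(k-1)p^{k-1}qz\bigr)=p^{k-1}q(1-s)$ rather than the quotient form, which is only a cosmetic reorganization) and then substitute $S^{(k)}(p^{k-1}q)=q$ and $s'|_{z=1}=\tfrac{pq}{1-kq}$ from Lemmas~\ref{probeventualreturn} and~\ref{lemma:S_k(z)_firstDerivative}. The one garbled fragment in your algebra is harmless, since the key identity $kp+(k-2)(1-kq)=(k-1)(2-qk)$ with $p=1-q$ does hold and your final expression matches the lemma.
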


\begin{proof}
Differentiating equation (\ref{eq:S_k(z)_firstDerivative}), we have:
\begin{align*}
\ppdz S^{(k)}(p^{k-1}qz)
	= & \dfrac{-p^{k-1}q\pdz S^{(k)}(p^{k-1}qz)}{(1-S^{(k)}(p^{k-1}qz))^k-(k-1)p^{k-1}qz}\\
	&  + \dfrac{p^{k-1}q(1-S^{(k)}(p^{k-1}qz))}{((1-S^{(k)}(p^{k-1}qz))^k-(k-1)p^{k-1}qz)^2}(-1) \\
    & \qquad\times\left[k(1-S^{(k)}(p^{k-1}qz))^{k-1}(-\pdz S^{(k)}(p^{k-1}qz))-(k-1)p^{k-1}q\right]\\
\end{align*}
Evaluating this at $z=1$, and using Lemmas~\ref{probeventualreturn} and \ref{lemma:S_k(z)_firstDerivative}, yields the result.
\end{proof}

We are now in a position to prove Theorem~\ref{asymptotics_theorem}.

\begin{proof}\textbf{(Proof of Theorem~\ref{asymptotics_theorem})}
In order to determine the asymptotics of $M^{(k)}_n(p)=[z^n]M^{(k)}(z)$, 
we need to determine the nature of the poles of the function
$$M^{(k)}(z) = \left(k-\dfrac{1}{q}\right)\dfrac{z}{(1-z)^2} 
	+ \dfrac{p}{q} \left(\dfrac{z}{1-z}\right) \left(\dfrac{1}{1-\tfrac{1}{q} S^{(k)}(p^{k-1}qz)}\right).$$
If $\frac{1}{q}S^{(k)}(p^{k-1}qz)=1$ then $z=1$, by Lemma~\ref{technical_lemma_one}, so the only pole is at $z=1$. 
We split our proof into three cases.\\[1em]
\noindent {\textbf{Case $q>\frac{1}{k}$}}:
By Lemma~\ref{probeventualreturn} we have $\frac{1}{q} S^{(k)}(p^{k-1}q)=:\lambda_{k,p} \in (0,1)$. 
Now
\begin{align*}
[z^n]M^{(k)}(z) 
	&= [z^n] \left(\left(k-\dfrac{1}{q}\right)\cdot \dfrac{z}{(1-z)^2} + \dfrac{p}{q}\cdot \dfrac{z}{1-z}\cdot \dfrac{1}{1-\frac1qS^{(k)}(p^{k-1}qz)}\right) \\
	&= \left(k-\dfrac{1}{q}\right)n + [z^n]\dfrac{p}{q}\cdot \dfrac{z}{1-z}\cdot \dfrac{1}{1-\frac1qS^{(k)}(p^{k-1}qz)}.
\end{align*}
As $\frac{1}{q}S^{(k)}(p^{k-1}qz) \neq 1$ at $z=1$, we have that 
$$\dfrac{1}{1-\frac{1}{q}S^{(k)}(p^{k-1}qz)} = \displaystyle\sum_{n \geq 0}c_n(1-z)^n,$$ 
for some constants $c_n$. We can set $z=1$ to determine $c_0=\frac{1}{1-\lambda_{k,p}}$. 
Therefore 
\begin{align*}
\dfrac{p}{q}\cdot \dfrac{z}{1-z}\cdot \dfrac{1}{1-\frac1q S^{(k)}(p^{k-1}qz)}
	&= \dfrac{p}{q}\cdot \left(\dfrac{1}{1-z}-1\right)\cdot \ds\sum_{n \geq 0}c_n(1-z)^n \\
	&= \dfrac{p}{q}\cdot \dfrac{1}{1-z}\cdot \dfrac{1}{1-\lambda_{k,p}}+\ds\sum_{n \geq 0}\alpha_n(1-z)^n,
\end{align*} for some constants $\alpha_n$, $n \geq 0$. 
It follows that 
$$[z^n]\dfrac{p}{q}\cdot \dfrac{z}{1-z}\cdot \dfrac{1}{1-\frac1qS^{(k)}(p^{k-1}qz)} 
	= \dfrac{p}{q}\cdot \dfrac{1}{1-\lambda_{k,p}} +  \mathcal{O} \left(\left(\frac{1}{R^{*}}+\epsilon\right)^n\right),$$ 
for any given $\epsilon > 0$. (See e.g. Wilf~\cite[Theorem 5.5]{wilf}) Here $R^{*}$ is the radius of convergence of $\frac1qS^{(k)}(p^{k-1}qz)$, as derived in Lemma~\ref{RadiusOfConvergence}.
Bringing these results together we have:
$$[z^n]M^{(k)}(z) = \left(k-\dfrac{1}{q}\right)n + \dfrac{p}{q}\cdot \dfrac{1}{1-\lambda_{k,p}} +  \mathcal{O} \left(\left(\dfrac{1}{R^{*}}+\epsilon\right)^n\right),$$
as required.\\[1em]
\noindent \textbf{Case $q<\frac{1}{k}$}:
Let us write 
$$M^{(k)}(z)= \left( k-\frac{1}{q}\right)\dfrac{z}{(1-z)^2} + \dfrac{1}{z-1}H^{(k)}(z),$$
where $H^{(k)}(z):=\frac{pz}{S^{(k)}(p^{k-1}qz)-q}$. 
Consider first the function $H^{(k)}(z)$.
We know from Lemma~\ref{probeventualreturn} that $H^{(k)}(z)$ has a pole at $z=1$.
Noting that the derivative 
$\pdz \left.\left(S^{(k)}(p^{k-1}qz)-q\right)\right|_{z\leftarrow 1}  \neq 0,$ 
by Lemma~\ref{lemma:S_k(z)_firstDerivative}, shows that this is a simple pole. 
Therefore we can write 
$$H^{(k)}(z)=\ds\sum_{n \geq -1}c_n(z-1)^n,$$ and now seek to determine both $c_{-1}$ and $c_0$. 
First notice that 
\begin{align*}
c_{-1}=& \textrm{Res}(H^{(k)}(z):z=1)
	=\dfrac{\left.pz\right|_{z\leftarrow 1}}{\frac{d}{dz}\left.\left(S^{(k)}(p^{k-1}qz)-q\right)\right|_{z\leftarrow 1}}
	=\dfrac{p}{\frac{pq}{1-kq}}
	=\frac{1}{q}-k.
\end{align*}
We can now calculate
$$c_0=\left.\left(H^{(k)}(z)-\frac{c_{-1}}{z-1}\right)\right|_{z \leftarrow 1}.$$ 
This value is 
\begin{align*}
c_0 &= \lim_{z\to 1} \dfrac{pz(z-1)-\left(\frac{1}{q}-k\right)\left(S^{(k)}(p^{k-1}qz)-q\right)}{(z-1)\left(S^{(k)}(p^{k-1}qz)-q\right)}\\
	&= \lim_{z\to 1} \dfrac{p(z-1)+pz-\left(\frac{1}{q}-k\right)\pdz S^{(k)}(p^{k-1}qz)}{(S^{(k)}(p^{k-1}qz)-q)+(z-1)\pdz S^{(k)}(p^{k-1}qz)},
\end{align*}
by applying l'H\^opital's rule. 
As both the numerator and denominator of the last expression are zero when $z=1$, we apply l'H\^opital's rule once again to find
\begin{flalign*}
	&&c_0=&\lim_{z\to 1} \dfrac{2p - \left(\dfrac{1}{q}-k\right) \ppdz S^{(k)}(p^{k-1}qz)}{ \pdz S^{(k)}(p^{k-1}qz)+\pdz S^{(k)}(p^{k-1}qz)+(z-1) \ppdz S^{(k)}(p^{k-1}qz)} &&\\
&&  =&
	\dfrac{2p - (1-kq)\dfrac{pq(k-1)(2-qk)}{(1-qk)^3}}{\frac{2pq}{1-kq}}  && \\
&&	=&
	\frac{2p(1-qk)^2 - pq(k-1)(2-qk)}{2pq(1-kq)} &&\\
&& 	=&\dfrac{2(1+q)-kq(6+q-3kq)}{2q(1-kq)},&&
\end{flalign*}
wherein the second equality is the result of applying Lemma~\ref{lemma:S_k(z)_secondDerivative}.
These two values for $c_0$ and $c_{-1}$ allow us to complete the asymptotic analysis for $M^{(k)}_n(p) = [z^n]M^{(k)}(z)$:

\begin{flalign*}
&& M^{(k)}_n(p)
	&=\left(k-\frac{1}{q}\right)n + [z^n] \dfrac{c_{-1}}{(z-1)^2} + [z^n]\dfrac{c_{0}}{z-1} 
		+ \mathcal{O} \left(\left(\frac{1}{R^{*}}+\epsilon\right)^n\right)  &&\\
&&	&=\left(k-\frac{1}{q}\right)n + \left(\frac{1}{q}-k\right)(n+1) - \dfrac{2(1+q)-kq(6+q-3kq)}{2q(1-kq)} + \mathcal{O} \left(\left(\frac{1}{R^{*}}+\epsilon\right)^n\right) && \\
&&	&=\dfrac{(1-k)(kq-2)}{2(1-kq)} + \mathcal{O} \left(\left(\frac{1}{R^{*}}+\epsilon\right)^n\right),&&
\end{flalign*}
for a given $\epsilon > 0$, as required.  
Once again $R^{*}$ is the radius of convergence of $\frac1qS^{(k)}(p^{k-1}qz)$, as derived in Lemma~\ref{RadiusOfConvergence}.\\[1em]
\noindent \textbf{Case $q=\frac{1}{k}$}:
Rewrite equation (\ref{sgeneratingfunction}) as
$S^{(k)}(z)=z\phi(S^{(k)}(z))$ where $\phi(x):=(1-x)^{-(k-1)}$. 
Functional equations of this form have been considered by Flajolet and Sedgewick~\cite[Section VI.7]{flajolet}. 
The functions $\phi(x)$ and $S^{(k)}(z)$ satisfy the conditions of \cite[Theorem VI.6]{flajolet} and, moreover, the unique positive solution to the characteristic equation $\phi(\tau)-\tau\phi'(\tau)=0$ is $\tau=\frac{1}{k}$.
Writing $$\rho=\dfrac{\tau}{\phi(\tau)}=\dfrac{(k-1)^{k-1}}{k^k}$$ 
	for the radius of convergence of $S^{(k)}(z)$ at 0, we therefore have that, near $\rho$, 
	\begin{align*}
	S^{(k)}(z)&=\dfrac{1}{k}-\sqrt{\dfrac{2(k-1)}{k^3}}\sqrt{1-\dfrac{z}{\rho}} 
		+ \displaystyle\sum_{j \geq 2}(-1)^jd_j\left(1-\dfrac{z}{\rho}\right)^{{j}/{2}},
	\end{align*}
	for some computable constants $d_j$.
Since $\rho$ is $p^{k-1}q$ when $q={1}/{k}$ we have, as $z \to 1$,
\begin{align*}
S^{(k)}(p^{k-1}q z) \sim \dfrac{1}{k} - \sqrt{\dfrac{2(k-1)}{k^3}}\sqrt{1-z}.
\end{align*}
Replacing this into the expression for $M^{(k)}(z)$ in Theorem~\ref{gf:one} we find 
\begin{align*}
M^{(k)}(z) \sim \sqrt{\dfrac{k(k-1)}{2}}(1-z)^{-\frac{3}{2}}.
\end{align*}
Therefore, by \cite[Theorem VI.1]{flajolet}, we have:
\begin{align*}
 [z^n]M^{(k)}(z) &\sim \sqrt{\dfrac{k(k-1)}{2}}\dfrac{\sqrt{n}}{\Gamma(3/2)},
\end{align*}
where $\Gamma({3}/{2})=\int_{0}^{\infty} e^{-t}{t}^{1/2} \, dt = {\sqrt{\pi}}/{2}$.
\end{proof}

\section{Diagonal state matters} \label{sec:four}
A natural aspect of the big-chooser process to consider is that of returning to a diagonal state, a state for which all matchboxes have an equal number of matches.
Knuth~\cite{knuth} made the striking observation that the expected residue in the $k=2$ case was equal to the expected value of $i$, where $(n-i,n-i)$ is the first time the two matchboxes are again equal in size after the process begins. 
Stirzaker gave further context for this observation in his paper~\cite{stirzaker}.
In this section we will consider this expected value for general $k$.

\subsection{Expected order of first diagonal return} 
Let $Y:=Y(k,n,p)$ be the order of the first diagonal return for the general $k$-matchbox process that begins in state $(n,n,\ldots,n)$.
That is, the first return to a diagonal state after leaving the initial configuration $(n,n,\ldots,n)$ is at $(n-Y,n-Y,\ldots,n-Y)$.
Define the generating function
$$R^{(k)}(z):=\displaystyle\sum_{n \geq 1}R_n^{(k)}(p)z^n,$$ where
$R^{(k)}_n(p) := \mathbb{E}(Y(k,n,p))$. The first value is clearly $R^{(k)}_1(p) =1$.

\begin{theorem} \label{thm:expectedreturn}
For $n\geq 2$, the expected order of the first diagonal return is
$$R^{(k)}_n(p)     = 
 n
- \frac{1}{q} \sum_{i=1}^{n-1} (n-i) s^{(k)}_i (p^{k-1}q)^i.
$$
Moreover, its generating function is
$$R^{(k)}(z) = \dfrac{z}{(1-z)^2}\left(1-\dfrac{1}{q}S^{(k)}(p^{k-1}qz)\right).$$
\end{theorem}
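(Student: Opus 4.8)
The plan is to determine the law of $Y=Y(k,n,p)$ explicitly, and then to read off both the closed form for $R^{(k)}_n(p)$ and its generating function from it. Recall from Subsection~\ref{sec:diagonal} that, after aggregating the $k-1$ larger boxes, a first diagonal return of order $i$ corresponds to a walk from $((k-1)n,n)$ to $((k-1)(n-i),n-i)$ using unit south and west steps that stays strictly below the boundary line $(k-1)y-x=0$ except at its endpoints, and that there are exactly $s^{(k)}_i$ such walks. First I would check that, read as a chooser run, each such walk carries probability precisely $p^{(k-1)i}q^{i-1}$: the step out of a diagonal state is always a south step taken with probability $1$ (equation~(\ref{sr2})), and strictly below the boundary line the largest box is strictly larger than the smallest box, so every later step is either a west step (a big-chooser acting on that strict maximum, probability $p$) or a south step (a little-chooser, probability $q$, after relabelling the newly decremented box as the smallest). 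A run of order $i$ therefore consists of one forced south step, $i-1$ further south steps, and $(k-1)i$ west steps, so $\P(Y=i)=s^{(k)}_i p^{(k-1)i}q^{i-1}=\tfrac1q s^{(k)}_i(p^{k-1}q)^i$ for $1\le i\le n-1$; one also needs this correspondence to be a bijection, which holds because below the boundary line the configuration is determined (up to permutation) by the aggregated coordinates.

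The remaining mass $1-\sum_{i=1}^{n-1}\P(Y=i)$ is then placed on $Y=n$: this absorbs the genuine order-$n$ return to the all-empty state together with every run in which the smallest box empties --- the aggregated walk reaches the $x$-axis and the process halts --- before the boundary line is revisited, all of which are recorded as $Y=n$ by the natural convention. (That convention is forced by the formula: for $k=3$, $n=2$ one has $Y=1$ with probability $p^2$ and $Y=2$ otherwise, whence $\E(Y)=2-p^2$.) Summing over the law of $Y$ then gives, for $n\ge2$,
\begin{align*}
R^{(k)}_n(p) &= \sum_{i=1}^{n-1} i\,\P(Y=i)+n\Bigl(1-\sum_{i=1}^{n-1}\P(Y=i)\Bigr)\\
&= n-\sum_{i=1}^{n-1}(n-i)\,\P(Y=i) = n-\frac1q\sum_{i=1}^{n-1}(n-i)\,s^{(k)}_i(p^{k-1}q)^i,
\end{align*}
and the same identity holds for $n=1$ with an empty sum, matching $R^{(k)}_1(p)=1$.

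For the generating function I would substitute $j=n-i$ to recognise the double sum as a Cauchy product:
\begin{align*}
\sum_{n\ge1} z^n\sum_{i=1}^{n-1}(n-i)\,\P(Y=i) &= \Bigl(\sum_{i\ge1}\P(Y=i)\,z^i\Bigr)\Bigl(\sum_{j\ge1} j\,z^j\Bigr)\\
&= \frac1q S^{(k)}(p^{k-1}qz)\cdot\frac{z}{(1-z)^2},
\end{align*}
using $\sum_{i\ge1}\P(Y=i)z^i=\tfrac1q\sum_{i\ge1}s^{(k)}_i(p^{k-1}qz)^i=\tfrac1q S^{(k)}(p^{k-1}qz)$ and $\sum_{j\ge1} jz^j=z/(1-z)^2$. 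Combined with $\sum_{n\ge1} nz^n=z/(1-z)^2$, this yields $R^{(k)}(z)=\dfrac{z}{(1-z)^2}\bigl(1-\tfrac1q S^{(k)}(p^{k-1}qz)\bigr)$, as claimed.

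The main obstacle is the first step --- establishing the law of $Y$ rigorously. This needs a careful bijection between order-$i$ first-diagonal-return chooser runs and the $s^{(k)}_i$ lattice paths of Subsection~\ref{sec:diagonal}: in particular the relabelling that turns every little-chooser move into a south step, and the fact that below the boundary line the maximum box strictly exceeds the minimum, so that every big-chooser move is a west step of probability $p$ and the step weights are uniformly $p$ and $q$ after the forced initial step. One also has to decide, and justify, that runs with no diagonal return contribute $Y=n$, since this is precisely what makes the clean formula hold. Once the distribution of $Y$ is in hand, the closed form and the generating-function identity are routine; in the $k=2$ case they recover Knuth's residue formula, in agreement with the observation quoted at the start of this section.
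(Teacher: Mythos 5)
Your proposal is correct and follows essentially the same route as the paper: it uses the first-return probabilities $\P(Y=i)=\tfrac1q s^{(k)}_i(p^{k-1}q)^i$ from Section~\ref{sec:three}, assigns the leftover mass to $Y=n$, sums for the expectation, and obtains the generating function by the same convolution with $\sum_{n\ge 1} nz^n = z/(1-z)^2$. The extra care you take in justifying the step weights and the $Y=n$ convention is material the paper delegates to Subsection~\ref{sec:diagonal} rather than a genuinely different argument.
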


\begin{proof}
The probability weights and numbers introduced in Section~\ref{sec:three} allow us to write 
$$\P(Y=i) = s^{(k)}_i (p^{k-1}q)^i/q$$
for all $1\leq i \leq n-1$, while
the  remaining probability
$$\P(Y=n) ~=~  1- \frac{1}{q}\left(s^{(k)}_1 (p^{k-1}q)^1 +  s^{(k)}_2 (p^{k-1}q)^2 + \cdots + s^{(k)}_{n-1}  (p^{k-1}q)^{n-1}\right).$$
From this it follows that
\begin{align*}
R^{(k)}_n(p) 
 = & \left(\sum_{i=1}^{n-1} i\cdot s^{(k)}_i (p^{k-1}q)^i/q\right)\\
&  + n\left( 1- \frac{1}{q}\left(s^{(k)}_1 (p^{k-1}q)^1 +  s^{(k)}_2 (p^{k-1}q)^2 + \cdots + s^{(k)}_{n-1}  (p^{k-1}q)^{n-1}\right)\right)\\
=& n- (n-1)s^{(k)}_1 (p^{k-1}q) \frac{1}{q} - (n-2)  s^{(k)}_2 (p^{k-1}q)^2\frac1q - \ldots - s^{(k)}_{n-1}  (p^{k-1}q)^{n-1}\frac1q.
\end{align*}
Figure~\ref{fig:diagonal:returns} illustrates the experimental value of $R^{(3)}_{100}(p)$. \\

The generating function $R^{(k)}(z)=\sum_{n\geq 1}R^{(k)}_n(p) z^n$ is therefore
\begin{align*}
R^{(k)} (z) = & z + \sum_{n\geq 2} \left(  n- (n-1)s^{(k)}_1 (p^{k-1}q) \frac{1}{q} 
	- \ldots - s^{(k)}_{n-1}  (p^{k-1}q)^{n-1}\frac1q \right) z^n \\
= & z + \sum_{n\geq 2} nz^n\\ 
  & -  \frac{1}{q}\sum_{n\geq 2} \left(  (n-1)z^{n-1}\cdot s^{(k)}_1 (p^{k-1}q)z 
	+ \ldots + z\cdot s^{(k)}_{n-1}  (p^{k-1}q)^{n-1}z^{n-1}\right) \\
= & \sum_{n\geq 1} nz^n -  \frac{1}{q} \left( \sum_{n\geq 1} nz^n \right) S^{(k)} (p^{k-1}qz) \\
= & \dfrac{z}{(1-z)^2} - \frac{1}{q} \dfrac{z}{(1-z)^2} S^{(k)}(p^{k-1}qz)\\
= & \dfrac{z(q-S^{(k)}(p^{k-1}qz))}{q(1-z)^2}.\qedhere
\end{align*}
\end{proof}

\begin{figure}
\begin{center}
\includegraphics[scale=0.75]{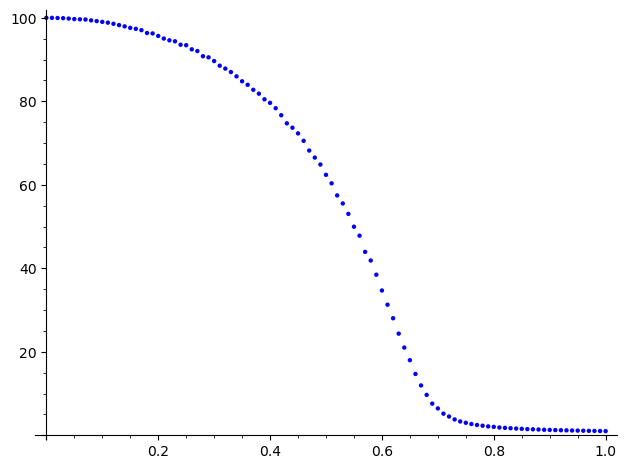}
\caption{The experimental value of the random variable $Y$ where the first return to the diagonal state is at $(n-Y,n-Y,n-Y)$ for $n=100$. 
\label{fig:diagonal:returns}}
\end{center}
\end{figure}

So although $M^{(2)}(z)=R^{(2)}(z)$, we can clearly see that this relationship does not hold for any $k>2$.
It is relatively straightforward to determine the asymptotic behaviour of $R_n(p)$.

\begin{proposition}\label{five:asympt}
    Let $R^{*}=\dfrac{(k-1)^{k-1}}{k^k} \cdot \dfrac{1}{p^{k-1}q}$ and let $\epsilon>0$. Then 
    $$R_n(p)=
    \begin{cases}
    \dfrac{p}{1-kq} + \mathcal{O}\left(\left(\dfrac{1}{R^{*}}+\epsilon\right)^n\right) & \mbox{ if }q<\tfrac{1}{k} \\[1em]
	\sqrt{\dfrac{8(k-1)n}{k\pi}} 
					+
                    \mathcal{O} \left(\dfrac{1}{n^{1/2}}\right)
                         & \mbox{ if }q=\tfrac{1}{k} \\[1em]
    n(1-\lambda_{k,p}) + \dfrac{p^{k-1}(1-q\lambda_{k,p})}{(1-q\lambda_{k,p})^k-(k-1)qp^{k-1}} + \mathcal{O}\left(\left(\dfrac{1}{R^{*}}+\epsilon\right)^n\right) & \mbox{ if }q>\tfrac{1}{k},
    \end{cases}
    $$
    as $n\to \infty$. 
\end{proposition}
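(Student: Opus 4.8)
The plan is to obtain the asymptotics of $R^{(k)}_n(p)=[z^n]R^{(k)}(z)$ directly from the closed form $R^{(k)}(z)=\frac{z}{(1-z)^2}\bigl(1-\tfrac1q S^{(k)}(p^{k-1}qz)\bigr)$ of Theorem~\ref{thm:expectedreturn} by singularity analysis, following the template of the proof of Theorem~\ref{asymptotics_theorem}, but in a strictly simpler setting: here $R^{(k)}(z)$ is a \emph{product} rather than a quotient, so there is no need for Lemma~\ref{technical_lemma_one} (no pole can be created by a vanishing denominator) nor for the second-derivative Lemma~\ref{lemma:S_k(z)_secondDerivative}. Write $g(z):=1-\tfrac1q S^{(k)}(p^{k-1}qz)$, so that $g(1)=1-\lambda_{k,p}$ by the probability-generating-function interpretation together with Lemma~\ref{probeventualreturn}, and use $\frac{z}{(1-z)^2}=\frac1{(1-z)^2}-\frac1{1-z}$. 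For $q\neq \tfrac1k$, Lemma~\ref{RadiusOfConvergence} and the remark following it show $g$ is holomorphic on $\{|z|<R^{*}\}$ with $R^{*}>1$, so the only singularity of $R^{(k)}(z)$ in that disc is $z=1$; for $q=\tfrac1k$ the argument $p^{k-1}qz$ reaches the singularity $\rho=\tfrac{(k-1)^{k-1}}{k^k}$ of $S^{(k)}$ exactly at $z=1$, where $g$ is then itself singular.

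\textbf{Case $q>\tfrac1k$.} Here $g(1)=1-\lambda_{k,p}\neq 0$, so $z=1$ stays a double pole. Expanding $g(z)=g(1)+g'(1)(z-1)+\mathcal O((z-1)^2)$ near $z=1$ and multiplying, the principal part of $R^{(k)}(z)$ at $z=1$ is $\frac{g(1)}{(1-z)^2}-\frac{g(1)+g'(1)}{1-z}$, and the remainder is analytic on $\{|z|<R^{*}\}$; extracting coefficients (as in Wilf~\cite[Thm.~5.5]{wilf}) gives $R^{(k)}_n(p)=g(1)\,n-g'(1)+\mathcal O\bigl((1/R^{*}+\epsilon)^n\bigr)$. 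Since $-g'(1)=\tfrac1q\,\frac{d}{dz}S^{(k)}(p^{k-1}qz)\big|_{z=1}$, substituting the $q>\tfrac1k$ branch of Lemma~\ref{lemma:S_k(z)_firstDerivative} identifies the constant as $\frac{p^{k-1}(1-q\lambda_{k,p})}{(1-q\lambda_{k,p})^k-(k-1)qp^{k-1}}$, giving the stated formula. \textbf{Case $q<\tfrac1k$.} Now $\lambda_{k,p}=1$ by Lemma~\ref{probeventualreturn}, so $g(1)=0$, while $g'(1)=-\tfrac1q\cdot\frac{pq}{1-kq}=-\frac{p}{1-kq}\neq 0$ by Lemma~\ref{lemma:S_k(z)_firstDerivative}. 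Thus $g$ has a simple zero at $z=1$ which cancels one power of $(1-z)^{-2}$: $z=1$ is only a simple pole of $R^{(k)}(z)$, with principal part $\frac{-g'(1)}{1-z}$ and analytic remainder on $\{|z|<R^{*}\}$, so $R^{(k)}_n(p)\to -g'(1)=\frac{p}{1-kq}$ with error $\mathcal O\bigl((1/R^{*}+\epsilon)^n\bigr)$.

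\textbf{Case $q=\tfrac1k$.} I would reuse the local expansion of $S^{(k)}$ at $\rho$ established (via Flajolet--Sedgewick~\cite[Thm.~VI.6]{flajolet}) in the proof of Theorem~\ref{asymptotics_theorem}: as $z\to1$, $S^{(k)}(p^{k-1}qz)=\tfrac1k-\sqrt{\tfrac{2(k-1)}{k^3}}\sqrt{1-z}+\mathcal O(1-z)$, whence $g(z)=\sqrt{\tfrac{2(k-1)}{k}}\sqrt{1-z}+\mathcal O(1-z)$ and therefore $R^{(k)}(z)\sim\sqrt{\tfrac{2(k-1)}{k}}\,(1-z)^{-3/2}$ near $z=1$. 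The transfer theorem~\cite[Thm.~VI.1]{flajolet} then yields $R^{(k)}_n(p)\sim\sqrt{\tfrac{2(k-1)}{k}}\cdot\frac{n^{1/2}}{\Gamma(3/2)}=\sqrt{\tfrac{8(k-1)n}{k\pi}}$, and carrying the next term of the singular expansion of $S^{(k)}$ through the transfer theorem produces the claimed error term.

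I do not expect a genuine obstacle: the argument is a lighter version of the proof of Theorem~\ref{asymptotics_theorem}, since $R^{(k)}(z)$ inherits its singularity structure from $\frac{z}{(1-z)^2}$ alone (when $q\neq\tfrac1k$) and one only needs the behaviour of $S^{(k)}(p^{k-1}qz)$ up to first order at $z=1$. The two points requiring care are (i) the purely algebraic identification of $-g'(1)$ with the closed-form constant in the $q>\tfrac1k$ case, which is a direct substitution of Lemma~\ref{lemma:S_k(z)_firstDerivative}, and (ii) in the $q=\tfrac1k$ case, invoking the Flajolet--Sedgewick machinery correctly and keeping enough terms of the square-root expansion of $S^{(k)}$ to justify the error bound.
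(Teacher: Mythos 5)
Your proposal is correct and follows essentially the same route as the paper's proof: extract the principal part of $R^{(k)}(z)=\frac{z}{(1-z)^2}\bigl(1-\tfrac1q S^{(k)}(p^{k-1}qz)\bigr)$ at $z=1$ using Lemmas~\ref{RadiusOfConvergence}, \ref{probeventualreturn} and \ref{lemma:S_k(z)_firstDerivative} (your $g(1)$, $g'(1)$ are exactly the paper's $c_0$, $c_1$), and for $q=\tfrac1k$ reuse the square-root expansion of $S^{(k)}$ with the transfer theorem. The only difference is notational (Taylor expansion of $g$ versus the paper's $c_i$, $\alpha_j$ coefficients), so there is nothing further to add.
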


\begin{proof}
    As $R^{(k)}(z)=\dfrac{z}{(z-1)^2}\left(1-\dfrac{1}{q}S^{(k)}(p^{k-1}qz)\right)$ we see that, 
	for $q\neq 1/k$, $R^{(k)}(z)$ only has a pole at $z=1$, and that it is of order $2$. 
	For the case $q=1/k$ this is not true and an algebraic singularity at $z = 1$ appears. 
	In any case the (unique) dominant singularity is at $z = 1$.

    Let $1-\dfrac{1}{q}S^{(k)}(p^{k-1}qz)=\displaystyle\sum_{i\geq0}c_i(z-1)^{i}$, and write \\
	$$R^{(k)}(z)=\ds\sum_{j\geq-2}\alpha_j(z-1)^{j} = \left(\dfrac{1}{(z-1)^2}+\dfrac{1}{z-1}\right) 
		\ds\sum_{i\geq0}c_i(z-1)^{i}
		.$$
    We wish to know $\alpha_{-2}$ and $\alpha_{-1}$. 
	We can see that $\alpha_{-2}=c_0$ and $\alpha_{-1}=c_1+c_0$. 
	Now,
    $$ c_0 = \left.\left(1-\dfrac{1}{q}S^{(k)}(p^{k-1}qz)\right)\right|_{z\leftarrow 1}=
    \begin{cases}
    0 & \mbox{ if } q<\tfrac{1}{k} \\
    1-\lambda_{k,p} & \mbox{ if } q>\tfrac{1}{k},
    \end{cases}
    $$
    where $\lambda_{k,p} :=(1-\dfrac{1}{q}S^{(k)}(p^{k-1}q))$ was defined in Lemma~\ref{probeventualreturn}.
    Also, 
	$$c_1
		= \dfrac{d}{dz}\left.\left[1-\dfrac{1}{q}S^{(k)}(p^{k-1}qz)\right]\right|_{z\leftarrow1}=
    	\begin{cases}
    	\dfrac{-p}{1-kq} & \mbox{ if } q<\tfrac{1}{k} \\[1em]
    	\dfrac{-p^{k-1}(1-q\lambda_{k,p})}{(1-q\lambda_{k,p})^k-(k-1)qp^{k-1}} & \mbox{ if } q>\tfrac{1}{k},
    	\end{cases}$$
	by Lemma~\ref{lemma:S_k(z)_firstDerivative}. 
    Finally, let $R^{*}=\dfrac{(k-1)^{k-1}}{k^k} \cdot \dfrac{1}{p^{k-1}q}$ and let $\epsilon>0$. $R^{*}$ is the radius of convergence of $\frac1qS^{(k)}(p^{k-1}qz)$, as derived in Lemma~\ref{RadiusOfConvergence}.  
	Now (e.g. using Wilf~\cite[Theorem 5.5]{wilf}) we have
	$$[z^n]R^{(k)}(z)\to[z^n]\dfrac{\alpha_{-2}}{(z-1)^2}+[z^n]\dfrac{\alpha_{-1}}{(z-1)}+\mathcal{O}\left(\left(\dfrac{1}{R^{*}}+\epsilon\right)^n\right),$$ as $n\to\infty$, from which the first and third results follow.

For the case $q=1/k$, and using $\rho={(k-1)^{k-1}}/{k^k}$ as we did in the proof of Theorem~\ref{asymptotics_theorem}, as $z\to 1$ the generating function
$R^{(k)}(z)={z(1-kS^{(k)}(\rho z))}/{(1-z)^2}$ is asymptotically
\begin{align*}
R^{(k)}(z) & \sim \dfrac{1}{(1-z)^2}\left(1-k\left(\dfrac{1}{k} - \sqrt{\dfrac{2(k-1)}{k^3}}\sqrt{1-z}\right)\right)\\
	& = \sqrt{\dfrac{2(k-1)}{k}}(1-z)^{-\frac{3}{2}}.
\end{align*}
Again, using \cite[Theorem VI.1]{flajolet} as we did in the proof of Theorem~\ref{asymptotics_theorem}, we have
\begin{align*}
[z^n]R^{(k)}(z) &\sim \sqrt{\dfrac{2(k-1)}{k}}\dfrac{2\sqrt{n}}{\sqrt{\pi}},
\end{align*}
as required.
\end{proof}

\subsection{Diagonal state probabilities and manila folders}
\label{sec:five:two}
The quantity $f_n^{(k)}$ is the probability that the system will return to a diagonal state, not necessarily for the first time, after $nk$ choosers have selected matches.
We can give a closed form expression for $f_n^{(k)}$ in terms of a sum, but in order to do so we require some terminology and a lemma.
We will present a closed form expression for $\paths_k(n,i)$ (defined at the end of Section~\ref{sec:two}) by showing the paths that it counts are equinumerous to another seemingly forgotten combinatorial object that was introduced and enumerated by Finucan~\cite{finucan}.

A manila folder is a filing cabinet folder that has a designated number of compartments. 
Each manila folder has $k-1$ compartments so that there are $k$ dividers including the front and back dividers.
Suppose we have $n$ such manila folders.
The folders are placed in a filing cabinet, fronts facing forward (in our diagrams the front is the right-hand side). 
It is permitted to (repeatedly) place a folder inside a compartment of another folder. 

Let $\Fin_k(n,i)$ be the set of possible folder arrangements consisting of $n$ manila folders each having $k-1$ compartments, 
and with precisely $i+1$ of the folder spines visible to a hypothetical observer looking up from below the cabinet.
Let $\manila_k(n,i)=|\Fin_k(n,i)|$.

For example, suppose that we have $n=2$ folders each with $k-1=2$ compartments. 
There are $3$ distinct ways of arranging the folders in a filing cabinet, see Figure~\ref{fig:manila}. 
Two of these arrangements, (b) and (c) in Figure~\ref{fig:manila}, have only one spine visible from below.
Configuration (b) can be represented by the sequence $((\epsilon,\epsilon),\epsilon)$ and configuration (c) by the sequence $(\epsilon,(\epsilon,\epsilon))$, where $\epsilon$ represents an empty compartment. 
So $\manila_3(2,1-1) = \manila_3(2,0)=2$.

The arrangement in (a) has two spines visible from below. 
This arrangement is two empty folders side-by-side and can be represented by the sequence $(\epsilon,\epsilon) (\epsilon,\epsilon)$. 
So $\manila_3(2,2-1)= \manila_3(2,1)=1$.

\finfigone 

Although it is intuitively obvious what $\Manila_k(n,i)$ contains, we find it useful to make a more formal recursive definition.

\begin{definition} \label{recursive:manila}
Fix $k \in \mathbb{N}$. Then, for each $n \in \mathbb{N}$,

\begin{enumerate}
\item[M1.] $\epsilon \in \Manila_k(0,-1)$ (the `no manila folder' element).
\item[M2.] $(M_1,\ldots,M_{k-1}) \in \Manila_k(n,0)$, where $M_j \in \Manila_k(n_j,\cdot)$ for $1 \leq j \leq k-1$, and $n_1+\ldots+n_{k-1}=n-1$.
\item[M3.] $M_1 M_2 \cdots M_{\ell} \in \Manila_k(n,\ell -1)$, where $\ell > 1$, $M_j \in \Manila_k(n_j,0)$ for $1 \leq j \leq \ell$, and $n_1+\ldots+n_{\ell}=n$.
\end{enumerate}
\end{definition}
This recursive definition generates (in a unique way) any configuration of manila folders.
The set of paths in $\Paths_k(n,i)$ has a similar recursive description.

\begin{definition} \label{recursive:paths}
Fix $k \in \mathbb{N}$. Then, for each $n \in \mathbb{N}$,

\begin{enumerate}
\item[P1.] $\epsilon \in \Paths_k(0,-1)$ (the `no path'/'empty path' element).
\item[P2.] $DP_1LP_2L\cdots LP_{k-1}L \in \Paths_k(n,0)$, where $P_j \in \Paths_k(n_j,\cdot)$ for $1 \leq j \leq k-1$, and $n_1+\ldots+n_{k-1}=n-1$.
\item[P3.] $P_1 P_2 \cdots P_{\ell} \in \Paths_k(n,\ell -1)$, where $\ell > 1$, $P_j \in \Paths_k(n_j,0)$ for $1 \leq j \leq \ell$, and $n_1+\ldots+n_{\ell}=n$.
\end{enumerate}
\end{definition}

This recursive definition generates all of the paths in the unit lattice that start at the boundary line $(k-1)y=x$ and take unit south and west steps, staying weakly below that boundary line, until ending at $(0,0)$. To see why P2 works, consider Figure~\ref{andrews:new:diagram}. 
\begin{figure}[!h]
\begin{tikzpicture}
\draw[gray,thin] (0,0) grid (10,5);
\draw[red,very thick] (0,0) -- (10,5);
\draw[black,very thick, dashed] (1,0) -- (10,4.5);
\draw[black,very thick, dashed] (2,0) -- (10,4);
\foreach \x/\y in {10/5,10/4,10/3,10/2,10/1,10/0}{
    \draw[fill=black] (\x,\y) circle (0.5ex);
    }
\node[anchor=east] at (0,0) {$(0,0)$};
\node[anchor=west] at (10,5) {$((k-1)n,n)$};
\node[anchor=west] at (10,4) {$((k-1)n,n-1)$};
\node[anchor=west] at (-1,-2.5) {$B_0:~(k-1)y=x$};
\node[anchor=west] at (1.25,-1.75) {$B_1:~(k-1)y+1=x$};
\node[anchor=west] at (3.5,-1.0) {$B_2:~(k-1)y+2=x$};
\draw[->]        (-0.7,-2.15) .. controls (-0.55, -1.5) and (-0.35,-0.5) .. (0,-0.1);
\draw[->]        (1.55,-1.4) .. controls (1.27,-0.85) .. (1,-0.1);
\draw[->]        (3.5,-1) .. controls (3,-0.8) and (2.2,-0.5) .. (2,-0.1);
\end{tikzpicture}
\caption{Illustration of the decomposition discussion about P2 in the paragraph following it
\label{andrews:new:diagram}} 
\end{figure}
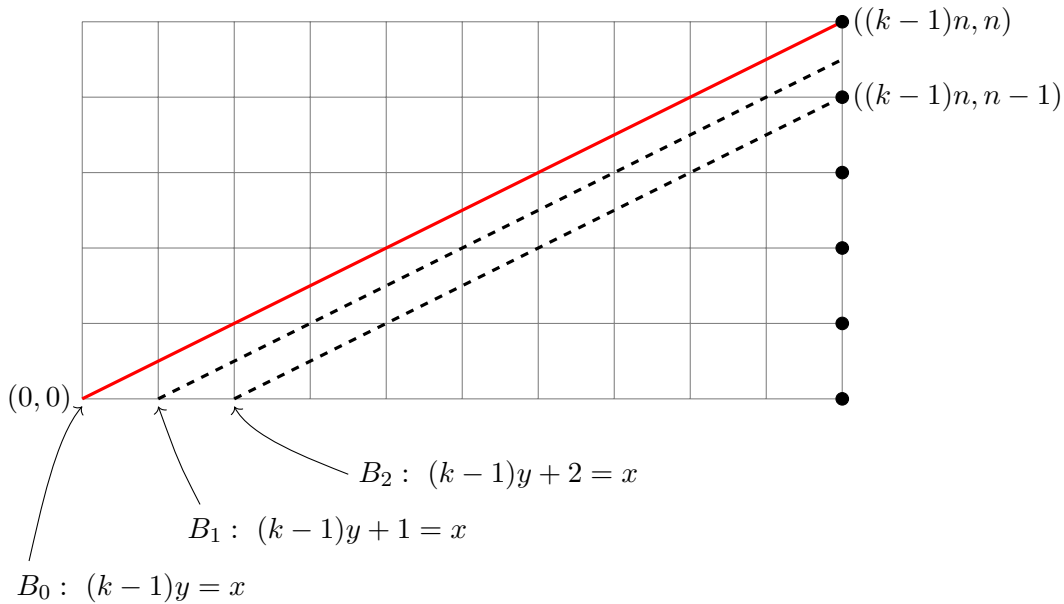
Notice that, for any given path $P$, and any pair of dotted lines $(B_i,B_{i+1})$ ($0 \leq i \leq k-2$), there will necessarily be a unique left step in $P$ that takes $P$ from $B_{i+1}$ to $B_i$ for the first time. This step corresponds to the $(k-1-i)^{th}$ L in the decomposition in P2.

\begin{lemma}\label{pathsfoldersbijection}
The sets $\Fin_k(n,i)$ and $\Paths_k(n,i)$ are equinumerous. 
\end{lemma}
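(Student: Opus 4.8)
The plan is to construct an explicit, parameter-preserving bijection $\Phi\colon\Manila_k\to\Paths_k$, exploiting the fact that Definitions~\ref{recursive:manila} and~\ref{recursive:paths} are word-for-word parallel: rule M1 corresponds to P1, M2 to P2, and M3 to P3. Define $\Phi$ by recursion on the number of folders. Set $\Phi(\epsilon):=\epsilon$. If $M=(M_1,\dots,M_{k-1})$ is produced by M2, set
\[
\Phi(M):=D\,\Phi(M_1)\,L\,\Phi(M_2)\,L\cdots L\,\Phi(M_{k-1})\,L,
\]
and if $M=M_1M_2\cdots M_{\ell}$ is produced by M3, set $\Phi(M):=\Phi(M_1)\Phi(M_2)\cdots\Phi(M_{\ell})$. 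The recursion is well-founded: in M2 each $M_j$ has at most $n-1$ folders, and in M3 (where $\ell>1$ and, since the class $\Manila_k(0,0)$ is empty, each $M_j$ has at least one folder) each $M_j$ has fewer than $n$ folders.

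First I would check, by induction on $n$, that $\Phi$ maps $\Manila_k(n,i)$ into $\Paths_k(n,i)$. The index bookkeeping is immediate: M2 and P2 both yield index $0$, while M3 and P3 both yield index $\ell-1$. The step counts also match: in the M2/P2 case the word $\Phi(M)$ contains $1+\sum_j n_j=n$ symbols $D$ and $(k-1)+\sum_j(k-1)n_j=(k-1)n$ symbols $L$, consistent with a path from $((k-1)n,n)$ to $(0,0)$; in the M3/P3 case it contains $\sum_j n_j=n$ symbols $D$ and $(k-1)n$ symbols $L$; and in both cases the ``weakly below the boundary line'' constraint is inherited from the pieces together with the explicit $D$ and $L$ steps (prepending $D$ raises the offset $x-(k-1)y$ from $0$ to $k-1$, and the offset equals $k-j$ just before $\Phi(M_j)$, so it never goes negative).

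The crux, and the step I expect to be the main obstacle, is proving $\Phi$ is a bijection; this reduces to showing that every $P\in\Paths_k(n,i)$ is \emph{uniquely readable} via P1--P3 (the corresponding statement for folders is the assertion following Definition~\ref{recursive:manila}). Which rule applies is forced by $(n,i)$ --- $(0,-1)$ gives P1, any $i\geq1$ gives P3, and $i=0$ with $n\geq1$ gives P2 --- so the three rules are mutually exclusive. For $i\geq1$ one cuts $P$ at its $i$ interior returns to the boundary line $(k-1)y=x$; each of the resulting $\ell=i+1$ fragments touches the boundary only at its endpoints, hence (translated to the origin) lies in $\Paths_k(\cdot,0)$, and because these cut points are intrinsic to $P$ the decomposition is the only possible one. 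For $i=0$ one argues the P2 decomposition as in the discussion around Figure~\ref{andrews:new:diagram}: the first step is forced to be $D$, landing on $B_{k-1}$, and for each $j=k-1,\dots,1$ there is a unique $L$-step carrying $P$ from $B_j$ to $B_{j-1}$ for the first time (one can only lower the offset $x-(k-1)y$ one unit at a time, via $L$). These $k-1$ distinguished $L$-steps cut $P$ into $P_1,\dots,P_{k-1}$, and the remaining work is to verify that each $P_j$, translated to start at the origin, is a genuine element of $\Paths_k$: it begins and ends at offset $k-j$, never dips below $k-j$, and its south/west totals give the claimed $n_j$. Granting unique readability, the inverse $\Psi\colon\Paths_k\to\Manila_k$ is defined by the mirror recursion, and $\Psi\circ\Phi=\mathrm{id}$ and $\Phi\circ\Psi=\mathrm{id}$ follow by a second induction on $n$. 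Restricting $\Phi$ to $\Manila_k(n,i)$ therefore gives a bijection onto $\Paths_k(n,i)$; in particular $\manila_k(n,i)=\paths_k(n,i)$ and $|\Fin_k(n,i)|=|\Paths_k(n,i)|$, as claimed.
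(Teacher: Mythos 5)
Your proof is correct and takes essentially the same approach as the paper: the paper's proof is precisely a comparison of Definitions~\ref{recursive:manila} and~\ref{recursive:paths} (with the unique $P2$ decomposition justified by the discussion around Figure~\ref{andrews:new:diagram}), and your map $\Phi$ is exactly that structural correspondence M1$\leftrightarrow$P1, M2$\leftrightarrow$P2, M3$\leftrightarrow$P3, coinciding with the map $\mu$ the paper sketches after the lemma. You simply spell out the unique-readability and offset bookkeeping that the paper leaves implicit.
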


\begin{proof}
This follows by comparing Definitions~\ref{recursive:manila} and~\ref{recursive:paths}.
\end{proof}

Equinumerosity of the sets is sufficient for our purposes but we find it interesting to take a moment and outline a bijective mapping between the two.

Consider the function $\mu: \Fin_k(n,i) \to \Paths_k(n,i)$ defined as follows.
Given $A\in \Fin_k(n,i)$, let $\mu(A)$ be the sequence derived in the following way.
Label the front of each folder $D$ (for {\it{down}}). 
Label each subsequent divider and the back of the folder $L$ (for {\it{left}}). 
Let $\mu(A)$ be the reading of the labels on the folders read in the order that they appear from the front to the back of the cabinet.
Equivalently, if using the bracketed $\epsilon$ representation of $A$, read it from right to left and 
\begin{itemize}
\item for every right-parenthesis that appears append a $D$ to $\mu(A)$,
\item for every left-parenthesis and comma that appears append an $L$ to $\mu(A)$.
\end{itemize}

See Figure~\ref{fig:bij:illustation} for an example of this labelling and the corresponding paths.
For example, for the arrangement $A \in \Fin_3(2,0)$ shown in Figure~\ref{fig:bij:illustation}(c), the label on the front facing folder is $D$. 
Reading the labels we encounter moving from right to left in the diagram we find $\mu(A) = DDLLLL$.

\finfigtwo 

The mapping $\mu$ applied to each of the three types of manila folder configuration in Definition~\ref{recursive:manila} results in the following paths:
\begin{enumerate}
\item[$\mu$1.] $\mu(\epsilon)=\epsilon$, the empty path.
\item[$\mu$2.] $\mu(M_1 M_2 \cdots M_{\ell}) = P_1 P_2 \cdots P_{\ell}$ where $\ell\geq 2$ and each $M_j \in \Fin_k(\cdot,0)$. 
		The resulting path returns to the boundary line precisely $\ell-1$ times between endpoints.
\item[$\mu$3.] $\mu((M_1,\ldots,M_{k-1})) = D P_1 L P_2 L \ldots L P_{k-1} L$ where each $M_j \in \Fin_k(\cdot,\cdot)$. 
		The resulting path does not touch the boundary line between endpoints.
\end{enumerate}

We now present a nice formula for $\paths_k(n,i)$, the number of paths from $((k-1)n,n) \to (0,0)$ that revisit the boundary line $(k-1)y=x$ precisely $i$ times en route.
\begin{proposition}\label{prop:paths}
$\paths_k(n,i)=\frac{i+1}{n}\binom{kn-i-2}{n-i-1}$    
\end{proposition}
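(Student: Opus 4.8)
The plan is to use the recursive description of $\Paths_k(n,i)$ given in Definition~\ref{recursive:paths} (equivalently, by Lemma~\ref{pathsfoldersbijection}, the manila-folder recursion) to set up generating-function identities for the numbers $\paths_k(n,i)$, and then extract coefficients. Introduce the bivariate generating function
\[
P(z,u) := \sum_{n\geq 1}\sum_{i\geq 0}\paths_k(n,i)\,z^n u^{i},
\]
together with $P_0(z):=\sum_{n\geq 1}\paths_k(n,0)z^n$, which counts paths that never revisit the boundary line between endpoints. First I would note that $S^{(k)}(z)=\sum_{n\geq 1}\paths_k(n)z^n$, since $\paths_k(n)=s^{(k)}_n$ (the number of first-diagonal-return paths), so $P_0$ and $S^{(k)}$ are related: rule P3 says every path in $\Paths_k(n,\ell-1)$ decomposes uniquely as a concatenation of $\ell\geq 1$ paths each lying in $\Paths_k(\cdot,0)$, hence $S^{(k)}(z) = P_0(z)/(1-P_0(z))$, equivalently $P_0(z) = S^{(k)}(z)/(1+S^{(k)}(z))$. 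Next, rule P2 gives $P_0(z) = z\,(P_0^{\,k-1}\text{-free version})$: more precisely, a path in $\Paths_k(n,0)$ is $DP_1LP_2L\cdots LP_{k-1}L$ where the $P_j$ range over \emph{all} of $\Paths_k$, i.e. over $\{\epsilon\}\cup\bigcup_{m\geq 1}\Paths_k(m)$. Letting $Q(z):=1 + \sum_{m\geq 1}\paths_k(m)z^m = 1+S^{(k)}(z)$ be the generating function that allows the empty path, rule P2 yields $P_0(z) = z\,Q(z)^{k-1} = z\,(1+S^{(k)}(z))^{k-1}$.

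The two expressions for $P_0$ then combine into a single algebraic equation. Substituting $P_0 = S^{(k)}/(1+S^{(k)})$ into $P_0 = z(1+S^{(k)})^{k-1}$ should reproduce (or be equivalent to, after the substitution $S\mapsto$ the appropriate shift) equation~\eqref{sgeneratingfunction}; this is a consistency check rather than the goal. For the full statement I would instead track the parameter $u$: since each of the $\ell$ blocks in P3 contributes one to the return count except that $\ell$ blocks give $\ell-1$ returns, concatenation of $j\geq 1$ $\Paths_k(\cdot,0)$-blocks contributes $u^{j-1}$, so
\[
P(z,u) = \sum_{j\geq 1} u^{\,j-1} P_0(z)^{\,j} = \frac{P_0(z)}{1-u\,P_0(z)}.
\]
Therefore $\paths_k(n,i) = [z^n u^i]\dfrac{P_0(z)}{1-uP_0(z)} = [z^n]\,P_0(z)^{\,i+1}$. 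So everything reduces to computing $[z^n]\,P_0(z)^{i+1}$ where $P_0(z)=z(1+S^{(k)}(z))^{k-1}$, and here I would use the Lagrange inversion formula. Writing $w := S^{(k)}(z)$, equation~\eqref{sgeneratingfunction} says $w = z\,(1-w)^{-(k-1)}$, i.e. $z = w(1-w)^{k-1}$, so $z$ is expressed cleanly in terms of $w$; moreover $P_0(z) = z(1+w)^{k-1}$. Substituting $z = w(1-w)^{k-1}$ gives $P_0 = w(1-w)^{k-1}(1+w)^{k-1} = w(1-w^2)^{k-1}$, but it is cleaner to keep $P_0$ as a function of $w$ and apply Lagrange inversion in the variable $w$.

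Concretely: $[z^n]\,P_0(z)^{i+1}$. Since $z = \phi(w)$ with $\phi(w)=w(1-w)^{k-1}$ and $w(0)=0$, $\phi'(0)\neq 0$, Lagrange inversion gives, for any formal power series $H(w)$ with $H(0)$ possibly nonzero,
\[
[z^n]\,H(w(z)) \;=\; \frac{1}{n}\,[w^{n-1}]\Bigl( H'(w)\,\bigl(w/\phi(w)\bigr)^{n}\Bigr)
\;=\; \frac{1}{n}\,[w^{n-1}]\Bigl(H'(w)\,(1-w)^{-n(k-1)}\Bigr).
\]
Taking $H(w) = P_0^{i+1} = w^{i+1}(1-w)^{(k-1)(i+1)}(1+w)^{(k-1)(i+1)}$ — wait, I should double-check which form of $H$ is simplest; I would in fact expect that expressing $P_0$ directly via $P_0 = z(1+S^{(k)})^{k-1}$ and $S^{(k)}=w$ so $H(w) = w^{i+1}\cdot\text{(stuff)}$ is workable, but a cleaner route is to observe $1+S^{(k)}(z) = 1/(1-w)$ is \emph{false} — rather, from $w=z(1-w)^{-(k-1)}$ we only get $z = w(1-w)^{k-1}$, so $1+w$ stays as is. Thus $H(w) = \bigl(w(1+w)^{k-1}(1-w)^{k-1}\bigr)^{i+1} = w^{i+1}(1-w^2)^{(k-1)(i+1)}$, and
\[
\paths_k(n,i) = \frac{1}{n}[w^{n-1}]\Bigl( (i+1)w^{i}(1-w^2)^{(k-1)(i+1)}\,(1-w)^{-n(k-1)}
 + w^{i+1}\cdot\tfrac{d}{dw}\bigl[(1-w^2)^{(k-1)(i+1)}\bigr](1-w)^{-n(k-1)}\Bigr).
\]
This does not obviously collapse to the claimed single binomial $\frac{i+1}{n}\binom{kn-i-2}{n-i-1}$, which strongly suggests the \emph{correct} simplification comes from a different (and simpler) form of $P_0$: namely I expect $P_0(z) = z\,Q(z)^{k-1}$ with $Q=1+S^{(k)}$, and that the right substitution is $z = P_0/(1+S^{(k)})^{k-1}$ combined with re-deriving that $1+S^{(k)} = 1/(1 - P_0/(\text{something}))$. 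The \textbf{main obstacle} is precisely identifying the cleanest functional equation: I anticipate that $P_0$ itself satisfies $P_0 = z(1-P_0\cdot c)^{-(k-1)}$-type relation for a suitable shift, or more likely one should apply Lagrange inversion directly with the substitution $w=P_0(z)$, using that $P_0$ satisfies $P_0 = z(1+S^{(k)})^{k-1}$ and $S^{(k)} = P_0/(1-P_0)\cdot(\ldots)$ — the key algebraic identity to nail down is $1+S^{(k)}(z) = (1-P_0(z))^{-1}$, which would follow if $S^{(k)} = P_0/(1-P_0)$, i.e. from the P3 concatenation. Granting that, $z = P_0(1-P_0)^{k-1}$, so $P_0$ is the compositional inverse of $\psi(t)=t(1-t)^{k-1}$, and then
\[
\paths_k(n,i) = [z^n]P_0(z)^{i+1} = \frac{i+1}{n}[t^{n-i-1}](1-t)^{-n(k-1)} = \frac{i+1}{n}\binom{n(k-1)+n-i-2}{n-i-1} = \frac{i+1}{n}\binom{kn-i-2}{n-i-1},
\]
by Lagrange inversion applied to $\psi(t)=t(1-t)^{k-1}$, since $[t^{m}](1-t)^{-N} = \binom{N+m-1}{m}$ with $N = n(k-1)$, $m = n-i-1$. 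So the proof is: (1) establish $S^{(k)} = P_0/(1-P_0)$ from P3; (2) establish $P_0 = z(1+S^{(k)})^{k-1}$ from P2, hence $z = P_0(1-P_0)^{k-1}$; (3) establish $P(z,u) = P_0/(1-uP_0)$ from P3 with the return-count bookkeeping, hence $\paths_k(n,i) = [z^n]P_0^{i+1}$; (4) apply Lagrange inversion to finish. The bookkeeping in step (3) and pinning down step (1)/(2) so that the inverse function is exactly $\psi(t)=t(1-t)^{k-1}$ is where care is needed; everything after is a one-line binomial identity.
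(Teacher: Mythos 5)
Your final outline is correct and takes a genuinely different route from the paper. The paper's proof goes through the manila-folder bijection of Lemma~\ref{pathsfoldersbijection} and then imports Finucan's closed-form count~\cite{finucan} of such arrangements, so the combinatorial work is outsourced; yours is a self-contained Lagrange-inversion argument: from P3 one gets $\paths_k(n,i)=[z^n]P_0(z)^{i+1}$, from P2 together with P3 one gets $z=P_0(1-P_0)^{k-1}$, and Lagrange inversion with $\phi(t)=(1-t)^{-(k-1)}$ and $H(t)=t^{i+1}$ then yields $\frac{i+1}{n}\binom{kn-i-2}{n-i-1}$ directly. This buys a proof using only machinery already in the paper, at the cost of losing the pleasant link to manila folders. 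Two corrections to your write-up. First, you assert $\paths_k(n)=s^{(k)}_n$ and set $S^{(k)}(z)=\sum_n\paths_k(n)z^n$; in fact $s^{(k)}_n=\paths_k(n,0)$, so the paper's $S^{(k)}$ is your $P_0$, not your total-path series. This is what derails your intermediate ``consistency check'' against \eqref{sgeneratingfunction} (your total-path series satisfies $\tilde{S}=z(1+\tilde{S})^k$, not $S=z(1-S)^{-(k-1)}$), though it is harmless to the final argument since you re-derive $z=P_0(1-P_0)^{k-1}$ from P2 and P3 without using \eqref{sgeneratingfunction}. Second, the cleanest version of your steps (1)--(2) is simply to note $P_0=S^{(k)}$ in the paper's sense and read $z=P_0(1-P_0)^{k-1}$ straight off \eqref{sgeneratingfunction}, skipping the detour through the total-path series entirely.
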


\begin{proof}
Finucan showed~\cite[\S 1.3]{finucan} that the number of arrangements of $n$ manila folders that each contain $k$ compartments and for which $h$ spines are hidden (equivalent to $n-h$ spines being visible) is
$\frac{n-h}{n} \binom{nk+h-1}{h}$. 
He\footnote{Finucan actually proved a more general result and this formula is a special case of his equation (4). See Finucan~\cite[Eqn. 4 and \S 4.2]{finucan}.}
called this number $F(k,n,h)$.
As our $\manila_k(n,i)$ counts the number of arrangements of $n$ folders, each with $(k-1)$ compartments, and with $(i+1)$ visible spines, we have:
\begin{align*}
    \manila_k(n,i)
        &= F(k-1,n,n-i-1) \\
        &= \dfrac{n-(n-i-1)}{n}\binom{n(k-1)+n-i-2}{n-i-1} \\
        &= \frac{i+1}{n}\binom{kn-i-2}{n-i-1} \\
        &= \paths_k(n,i),
\end{align*}
by Lemma~\ref{pathsfoldersbijection}.
\end{proof}

This allows us to give an exact expression for $f^{(k)}_n$.
\begin{proposition}\label{fnk:enum}
For $n\geq 1$ we have:
\begin{align*}
f^{(k)}_n=p^{(k-1)n}\displaystyle\sum_{i=0}^{n-1}\frac{n-i}{n}\binom{(k-1)n+i-1}{i}q^{i}.
\end{align*}
\end{proposition}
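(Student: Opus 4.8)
The plan is to read off $f^{(k)}_n$ as a coefficient of the generating function in equation~(\ref{f:defn}) and then substitute the closed form for $\paths_k(n,i)$ obtained in Proposition~\ref{prop:paths}. Write $g(z):=\tfrac{1}{q}S^{(k)}(p^{k-1}qz)$, so that $f^{(k)}(z)=1/(1-g(z))=\sum_{\ell\ge0}g(z)^\ell$ as formal power series (legitimate since $g(0)=0$). As recalled in Subsection~\ref{sec:three}, $g(z)=\sum_{i\ge1}s^{(k)}_i p^{(k-1)i}q^{i-1}z^i$, so for fixed $\ell\ge1$, expanding $g(z)^\ell$ and collecting the coefficient of $z^n$ gives
$$[z^n]g(z)^\ell=\sum_{\substack{n_1+\cdots+n_\ell=n\\ n_j\ge1}}\ \prod_{j=1}^\ell s^{(k)}_{n_j}p^{(k-1)n_j}q^{n_j-1}=p^{(k-1)n}q^{n-\ell}\sum_{\substack{n_1+\cdots+n_\ell=n\\ n_j\ge1}}\ \prod_{j=1}^\ell s^{(k)}_{n_j}.$$

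The next step is to recognise the remaining composition sum as $\paths_k(n,\ell-1)$. By the decomposition P3 of Definition~\ref{recursive:paths}, every path in $\Paths_k(n,\ell-1)$ factors uniquely as a concatenation $P_1\cdots P_\ell$ with $P_j\in\Paths_k(n_j,0)$ and $n_1+\cdots+n_\ell=n$; and $|\Paths_k(m,0)|=s^{(k)}_m$, since paths that stay strictly below the boundary line except at their endpoints are exactly the first diagonal returns of order $m$ (the endpoints $((k-1)m,m)$ and $(0,0)$ both lying on the boundary). Hence $\paths_k(n,\ell-1)=\sum_{n_1+\cdots+n_\ell=n}\prod_j s^{(k)}_{n_j}$, valid for all $\ell\ge1$ (the case $\ell=1$ being the equality $\paths_k(n,0)=s^{(k)}_n$). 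Since $g(z)^\ell$ has no terms below degree $\ell$, only $1\le\ell\le n$ contribute to $[z^n]f^{(k)}(z)$, and therefore
$$f^{(k)}_n=\sum_{\ell=1}^n p^{(k-1)n}q^{n-\ell}\paths_k(n,\ell-1)=p^{(k-1)n}\sum_{i=0}^{n-1}\paths_k(n,i)\,q^{n-i-1}.$$
(This identity also admits the direct probabilistic reading that $f^{(k)}_n$ is the chance of being at a diagonal state after $kn$ choosers: a path in $\Paths_k(n,i)$ contributes $(k-1)n$ west steps of weight $p$ together with $n$ south steps, of which the first step and the $i$ steps leaving the intermediate diagonal states are forced, leaving $n-i-1$ south steps of weight $q$.)

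Finally I would substitute $\paths_k(n,i)=\tfrac{i+1}{n}\binom{kn-i-2}{n-i-1}$ from Proposition~\ref{prop:paths} and re-index by $j:=n-1-i$, under which $i+1\mapsto n-j$, $\,n-i-1\mapsto j$, and $\,kn-i-2\mapsto(k-1)n+j-1$. This turns the previous display into
$$f^{(k)}_n=p^{(k-1)n}\sum_{j=0}^{n-1}\frac{n-j}{n}\binom{(k-1)n+j-1}{j}q^{j},$$
which is the claimed formula after renaming $j$ to $i$. The computation is essentially routine; the only points needing care are the identification of the composition sum with $\paths_k(n,\ell-1)$ via the path decomposition P3 (equivalently, the bookkeeping that a path with $i$ interior boundary returns carries exactly $i+1$ probability-one south steps, so the power of $q$ is $n-i-1$ rather than $n-i$), and performing the binomial re-indexing without an off-by-one slip — a quick check at $n=1$, where both sides equal $p^{k-1}$, guards against the latter.
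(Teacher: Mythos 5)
Your proof is correct and essentially the paper's: both reduce the claim to the identity $f^{(k)}_n = p^{(k-1)n}\sum_{i=0}^{n-1}\paths_k(n,i)\,q^{n-1-i}$ and then substitute Proposition~\ref{prop:paths} with the same re-indexing $j=n-1-i$. The only difference is that you justify the intermediate identity algebraically, by expanding $f^{(k)}(z)=\sum_{\ell\ge 0} g(z)^{\ell}$ and matching the composition sum with $\paths_k(n,\ell-1)$ via decomposition P3, whereas the paper obtains it directly by conditioning on the number of returns to the boundary line; your route makes explicit the step the paper states in one line.
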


\begin{proof}
We can interpret $f^{(k)}_n$ using (probability) weighted lattice paths from $\Paths_k(n)$.
By conditioning on the number of returns to the boundary line it follows that
\begin{align*}
f^{(k)}_n
    &= p^{(k-1)n}\displaystyle\sum_{i=0}^{n-1} \paths_k(n,i)q^{n-1-i} \\
    &= p^{(k-1)n}\displaystyle\sum_{i=0}^{n-1} \dfrac{i+1}{n} \binom{n(k-1)+n-i-2}{n-(i+1)} q^{n-1-i} \\	
    &= p^{(k-1)n}\displaystyle\sum_{j=0}^{n-1} \dfrac{n-1-j+1}{n} \binom{n(k-1)+n-(n-1-j)-2}{n-(n-1-j+1))} q^{j} \\	
    &= p^{(k-1)n}\displaystyle\sum_{j=0}^{n-1} \dfrac{n-j}{n} \binom{n(k-1) +j -1}{j} q^{j},
\end{align*}
wherein the second equality uses Proposition~\ref{prop:paths}
\end{proof}

\section*{Acknowledgements}
The authors thank David Stirzaker for sharing a copy of \cite{stirzaker} with them.
They also thank the anonymous referee for valuable suggestions that have improved the paper.

\end{document}